\newcommand{\s}{\small}
\newlength{\notewidth}
\newtheorem{lemma}{Lemma}
\newtheorem{proposition}[lemma]{Proposition}
\newtheorem{theorem}{Theorem}
\newtheorem{conjecture}{Conjecture}
\theoremstyle{definition}
\newtheorem{remark}[lemma]{Remark}
\newcommand{\R}{{\mathbb R}}
\newcommand{\Z}{{\mathbb Z}}
\newcommand{\RP}{{\mathbb {RP}}}
\newcommand{\be}{\begin{equation}}
\newcommand{\ee}{\end{equation}}
\renewcommand{\L}{\mathcal{L}}
\newcommand{\mn}{\medskip\noindent}
\begin{document}

\title{On cusps of caustics by reflection: a billiard variation on Jacobi's Last Geometric Statement}

\author{Gil Bor\footnote{
CIMAT, A.P. 402, Guanajuato, Gto. 36000, Mexico; 
{\em gil@cimat.mx}
}
\and
Serge Tabachnikov\footnote{
Department of Mathematics,
Penn State University, 
University Park, PA 16802;
{\em tabachni@math.psu.edu}}
}

\date{\today}
\maketitle


\begin{abstract} A point source of light is placed inside an oval. The  $n$-th caustic by reflection is the envelope of the light rays emanating from the light source after  $n$ reflections off the curve. We show that each of these caustics, for a generic point light source, has  at least 4 cusps. This  is a billiard variation on  Jacobi's Last Geometric Statement, concerning the number of cusps of the conjugate locus of a point on a convex surface. We present various proofs, using different ideas, including the  curve shortening flow and Legendrian knot theory.
\end{abstract}

\section{Introduction}
\subsection{Motivation and background}
The {\em conjugate locus} of a point on a surface is the locus of first conjugate points along  geodesics emanating from that point. 
In his ``Lectures on Dynamics" \cite{Cl}, published posthumously,
Jacobi stated that  the conjugate locus of a generic point on an ellipsoid has exactly four cusps.
This {\it Last Geometric Statement of Jacobi} was proved only in this century, see \cite{IK}. Indeed, as
recently as the end of the 20th century, Marcel Berger wrote \cite{Ber}:
\begin{quote}
{\small
... this latter assumption depends on the scandalously unproved Jacobi ``statement": the conjugate locus of a non-umbilical point of an ellipsoid has exactly four cusps.}
\end{quote}

A related result is that the conjugate locus of a generic point on a convex surface has at least four cusps, see \cite{Wa} for a recent proof. This theorem was attributed to C. Carath\'eodory (1912) by W. Blaschke (sect. 103 of \cite{Bl}), who presented a sketch of the proof. This theorem belongs to a long list of results that stem from and are motivated by the celebrated 4-vertex theorem of S. Mukhopadhyaya. See \cite{Ar,GTT} for surveys. 

The conjugate locus can be equivalently described as the locus of the first intersections of infinitesimally close geodesics emanating from a point. These geodesics may intersect more than once, and the loci of their intersections are known as second, third, etc., caustics of the point. Statements similar to Jacobi's statement and generalization to arbitrary convex surfaces about the  higher order caustics are still open. There is some experimental evidence that if the surface is an ellipsoid then, for a non-umbilic point, each such caustic has exactly four cusps, see \cite{Si} and Figure \ref{ellipsoids} from this paper (presented with permission). 

\begin{figure}[ht]
\centering
\includegraphics[width=.3\textwidth]{./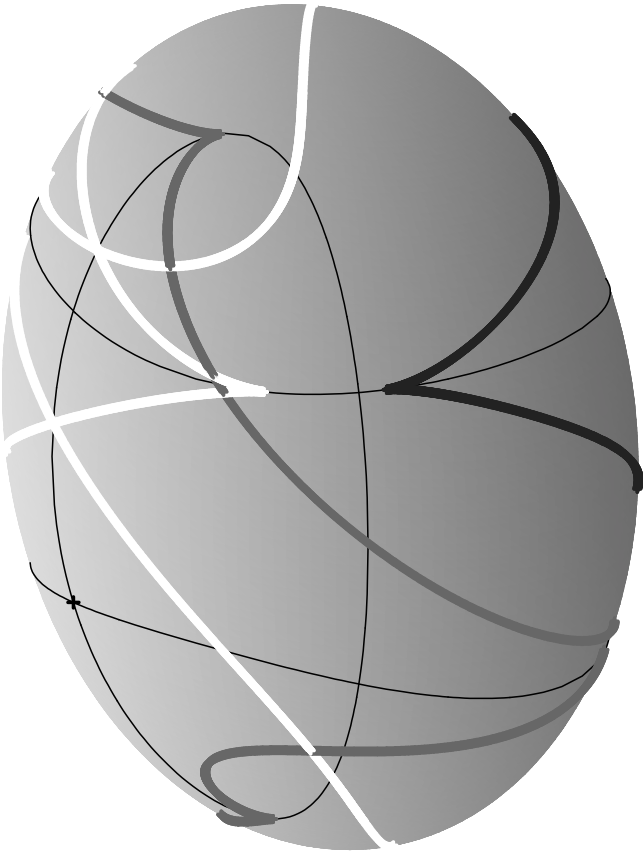}\hspace{.2\textwidth}
\includegraphics[width=.3\textwidth]{./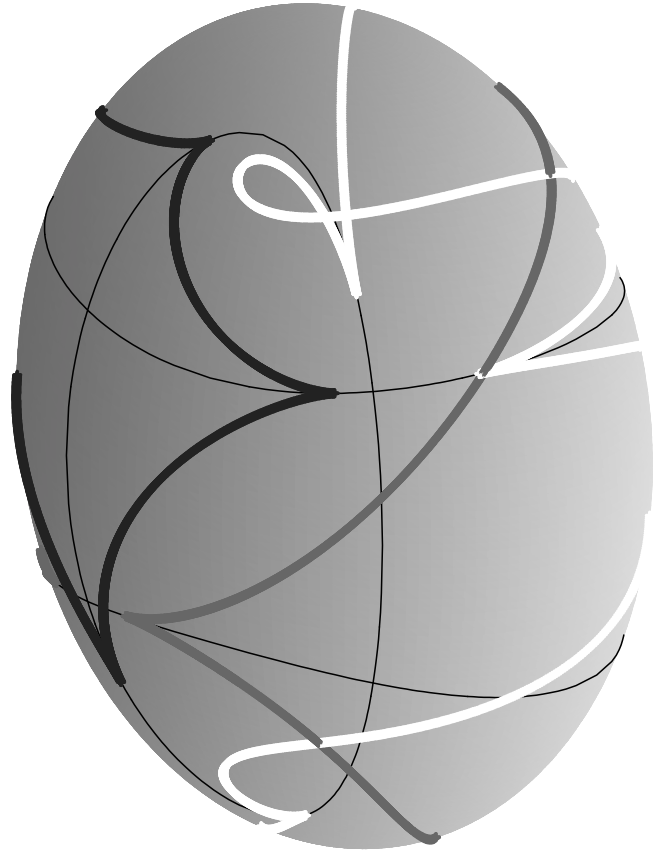}
\caption{The first three caustics of an non-umbilic point on an ellipsoid.}
\label{ellipsoids}
\end{figure}

In this article we consider a billiard version of this  problem. Let $\gamma$ be an oval (a smooth strictly convex closed curve in $\R^2$), the boundary of a billiard table or, equivalently, an ideal mirror. Let $O$ be a point inside $\gamma$, a source of light. For $n=1,2.\ldots$, the 1-parameter family of rays that have undergone $n$ optical reflections in $\gamma$ envelopes a curve $\Gamma_n$, the {\em $n$-th caustic by reflection}. See Figure \ref{fig:caustics}.

\begin{figure}[ht]
\centering
\def\svgwidth{.4\textwidth}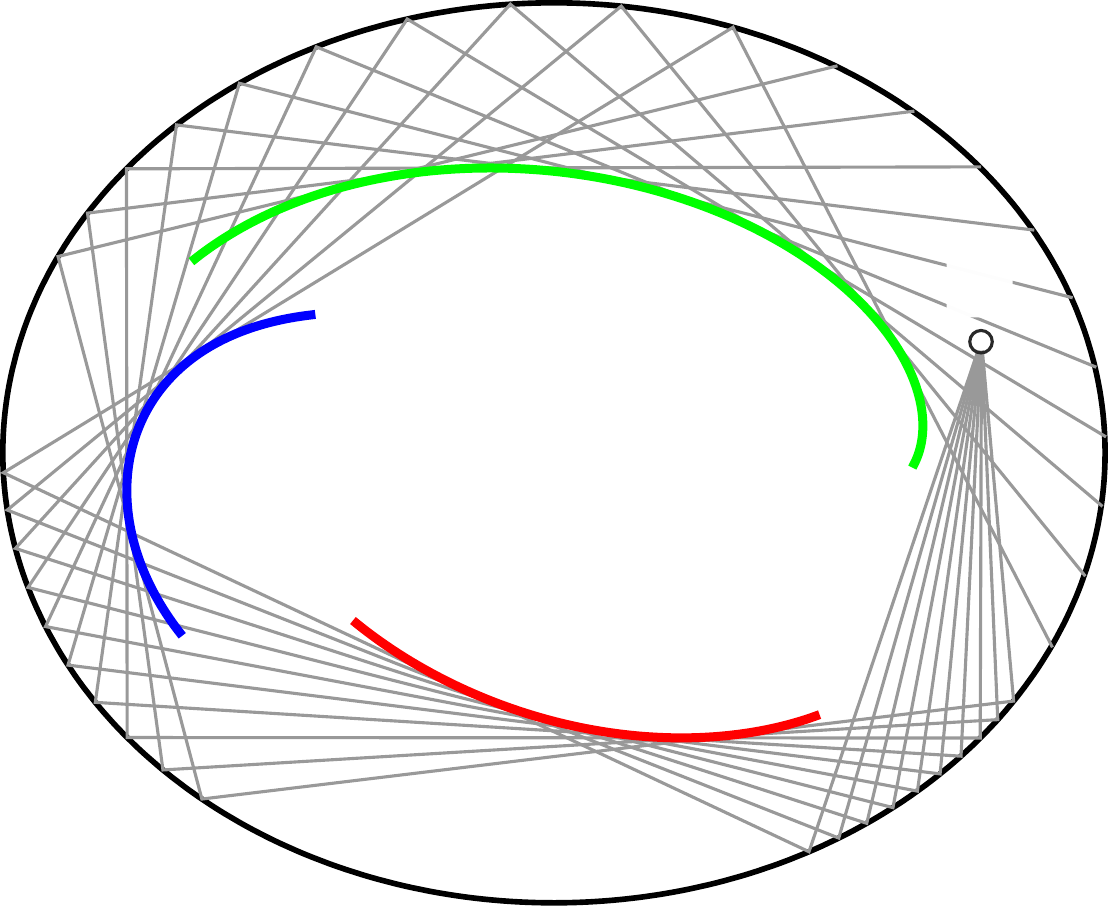
\caption{The   $n$-th caustic by reflection $\Gamma_n$ is the envelope of the family of rays emanating from $O$ that have undergone $n$ reflections by $\gamma$.}
\label{fig:caustics}
\end{figure}

These caustics may have singularities, generically, semi-cubical cusps. We always assume that the caustics $\Gamma_n$ are in general position in this sense. The singularities of caustics were thoroughly studied  by Bruce, Giblin, and Gibson; see \cite{BGG} and the references therein.

Figure \ref{fig:cc}a shows that a caustic by reflection may extend beyond the interior of $\gamma$, and furthermore,  it can be disconnected in the Euclidean plane; however, as the envelope of a 1-parameter family of lines, it is a connected curve in the projective plane $\RP^2$ (possibly, with singularities). Indeed, a 1-parameter family of lines is a curve in the space of lines, and the respective envelope is projectively dual to this curve.

\begin{figure}[ht]
\centering
\def\svgwidth{.45\textwidth}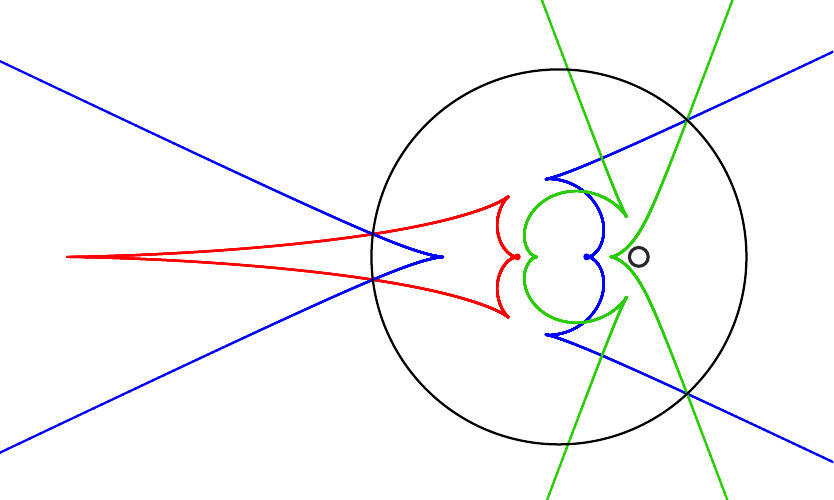
\hspace{.15\textwidth}
\def\svgwidth{.32\textwidth}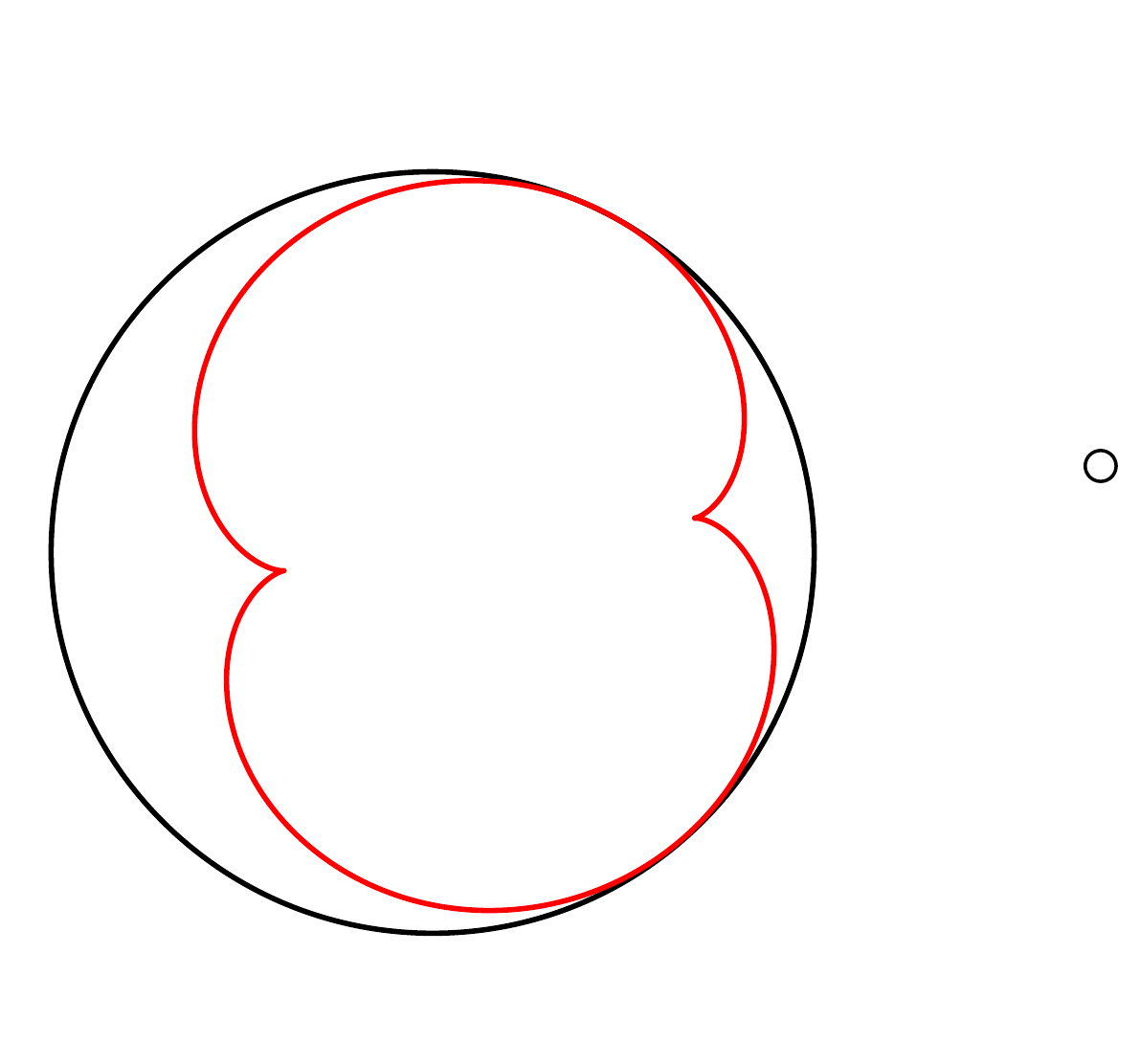

\caption{(a)  The first three caustics by reflection in a circle, showing 4 cusps on each of them (red=1st caustic, blue=2nd, green=3rd). The small circle is the light  source. (b) The first caustic by reflection  in a circle with an external source of light (each ray optically reflects at both intersection points with the circle). The caustic in this case has only 2 cusps. In this paper we  consider only an internal light source.}
\label{fig:cc}
\end{figure}

\subsection{The main result and two conjectures}
Our main result is as follows.

\begin{theorem} \label{thm:main}
For every oval $\gamma\subset\R^2$, a generic light source  inside $\gamma$ and  $n\ge 1$, the $n$-th caustic by reflection   $\Gamma_n\subset\RP^2$ has at least four cusps. 
\end{theorem}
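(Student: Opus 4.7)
My plan is to combine projective duality with a topological count of cusps, verify the resulting invariant in a model case, and propagate the bound to all ovals by deformation.

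\textbf{Reduction via duality.} Each $n$-times-reflected ray $\ell_n(s)$ is a point in the dual plane $\RP^{2*}$; the one-parameter family $s\mapsto\ell_n(s)$ traces a smooth closed curve $\tilde\Gamma_n\subset\RP^{2*}$ whose projective dual is $\Gamma_n$. Under projective duality, cusps of a curve correspond to inflection points of its dual, so the task becomes to show that $\tilde\Gamma_n$ has at least four inflections. Equivalently, $\Gamma_n$ is the front projection of a Legendrian curve in $ST^*\RP^2$, and we are lower-bounding the cusps of this front.

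\textbf{Counting via tangent rotation.} Parametrize both curves by $s\in S^1$. The tangent line to $\Gamma_n$ at its $s$-th point is precisely $\ell_n(s)$. Let $R_n$ be the rotation number of the oriented ray direction (so the ray rotates by $2\pi R_n$ as $s$ runs once around $\gamma$). Each semicubical cusp reverses the oriented tangent by $\pi$, so if $W_n$ denotes the winding number of the traced curve $\Gamma_n$ then
\[
C_n \;=\; 2\,(R_n - W_n).
\]
Applying the reflection identity $\dot\beta_k=2\dot\nu_k-\dot\alpha_k$ at each bounce, and using that for a convex billiard the $k$-th bounce point circles $\gamma$ once as $s$ does, one obtains $R_n=1$ inductively for every $n\geq 1$. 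Thus it remains to show $W_n\leq -1$.

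\textbf{Model case and deformation.} When $\gamma$ is a round circle and $O$ is a generic off-center interior point, one can compute $\Gamma_n$ explicitly and verify that it is an astroid-like cuspidal curve with $W_n=-1$, so $C_n=4$, consistent with Figure~\ref{fig:cc}a. For a general oval, apply the curve-shortening flow to deform $\gamma$ smoothly through strictly convex ovals to a round circle. The caustic $\Gamma_n^t$ varies smoothly except at codimension-one swallowtail events where two cusps are born or die, changing $C_n$ by $\pm 2$ while leaving $R_n$ and $W_n$ unchanged. Hence $W_n$ is deformation-invariant, equal to $-1$ throughout, yielding $C_n\geq 4$.

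\textbf{Main obstacle.} The delicate step is controlling the deformation: one must justify that $W_n$ really is invariant under the flow despite the caustic possibly escaping to infinity or passing through $O$ at isolated $s$, and that such degenerate events have codimension at least one in the space of sources. A clean way to bypass planar-winding subtleties is to recast $W_n$ as the Maslov-type rotation invariant of the Legendrian lift of $\tilde\Gamma_n$ in $ST^*\RP^2$; Legendrian isotopy preserves this invariant, and the curve-shortening flow on $\gamma$ induces such an isotopy on the lifted family. Combined with a genericity argument on $O$, this should complete the proof.
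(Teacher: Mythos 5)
Your reduction via projective duality (cusps of $\Gamma_n$ $\leftrightarrow$ inflections of the dual curve of rays) matches the paper's starting point, but the counting step fails, and for a structural reason. The identity $C_n=2(R_n-W_n)$ can only hold as a \emph{signed} count of cusps (a Maslov-type index): at a generic semicubical cusp the oriented tangent jumps by $\pi$ with a sign, and the two cusps born or killed at a swallowtail have opposite signs. Your own deformation paragraph exposes the contradiction: you say a swallowtail changes $C_n$ by $\pm2$ while $R_n$ and $W_n$ are unchanged, which is incompatible with $C_n=2(R_n-W_n)$ being an equality for the unsigned count. (It must fail as an unsigned equality anyway, since caustics with more than four cusps exist -- see Figure \ref{fig:4plus} and Conjecture \ref{conj:two}.) Read as a signed count, the argument gives only $C_n\ge 2|R_n-W_n|$, and this index is not $4$: for the evolute of an ellipse, the model for these caustics, the four cusps come from two curvature maxima and two minima and contribute with opposite signs, so the index is $0$. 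More generally, no regular-homotopy or winding-number invariant can prove this theorem: the paper's Figure \ref{curves} exhibits two fronts with the same Whitney index, one of which has only two vertices; and the beam of oriented tangent lines to a circle winds once around $\L$ with $R=1$ yet envelopes a caustic with no cusps at all. So the same topology is compatible with fewer than four cusps.

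What your argument is missing is the one non-topological input that makes the theorem true: the $n$-th reflected beam $C_n\subset\L$ is an embedded closed curve enclosing \emph{zero signed area} $\int_{C_n}p\,d\alpha=0$, because the billiard ball map is exact (Proposition \ref{prop:exact}); equivalently, $C_n$ meets the graph of every first harmonic $p=a\sin\alpha-b\cos\alpha$, i.e.\ its spherical image meets every great circle. After that, one still needs a Sturm-type theorem rather than an index count: Segre's theorem on spherical curves not contained in a hemisphere (first proof), the Sturm--Hurwitz theorem applied to $F''+F$ after flowing $C_n$ to a graph by curve shortening \emph{on the cylinder of lines}, not on $\gamma$ (second proof), or the Chekanov--Pushkar theorem on Legendrian unknots (third proof). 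Your instinct to deform to a model case and transport an invariant is exactly the paper's third proof, but the invariant that must be transported is the Legendrian isotopy class of the lift $\widetilde C_n$ (which requires the zero-area condition for the front to close up, Lemma \ref{lm:front}), not a Maslov number.
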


We present  three proof sketches.

Let $\L$ be the space of directed lines in  $\R^2$. To each caustic  $\Gamma_n$ is associated its dual  curve  $C_n\subset\L$, corresponding to  the tangent lines along  $\Gamma_n$ (the rays of the $n$-th reflected beam). One can identify  $\L$ with the complement of the `north' and `south' pole of the unit sphere $S^2\subset\R^3$,  so that cusps of $\Gamma_n$ correspond to   inflection points of $C_n$ (points with vanishing   spherical geodesic curvature). Using  standard properties of convex billiards, we show that $C_n$ is a closed simple smooth curve in  $S^2$, intersecting every great circle. A theorem of B. Segre from 1968  \cite{Se,Wei}  states  that such a curve has at least four spherical inflection points, thus completing the proof of Theorem \ref{thm:main}. 

Another approach, starts with a realization of $\L$ as the vertical cylinder circumscribing $S^2$ and the curve $C_n \subset \L$ representing  the tangent lines of $\Gamma_n$.
Following S. Angenent \cite{An}, apply  
the curve shortening flow with respect to the flat metric on the cylinder to the curve $C_n$ to deform it  to the  graph of a function  $F:S^1\to\R$ with zero mean value. Spherical inflection points of $C_n$ correspond  to  the zeroes of $F''+F$, a function with vanishing constant  and first order Fourier terms. By the Sturm-Hurwitz theorem, it has at least four zeros.

Yet another approach is to use the relation between the cusps of the caustic $\Gamma_n$ and the vertices (critical points of the curvature) of its normal front $\Delta_n$, a closed planar curve whose normal lines, parametrized by $C_n$, are the lines tangent to $\Gamma_n$, see Figure \ref{fig:3curves}. The relation bewteen $\Gamma_n$ and $\Delta_n$ is the familiar relation between evolutes and involutes, see, e.g., \cite{GTT}.  

 \begin{figure}[ht]
\centering
\def\svgwidth{\textwidth}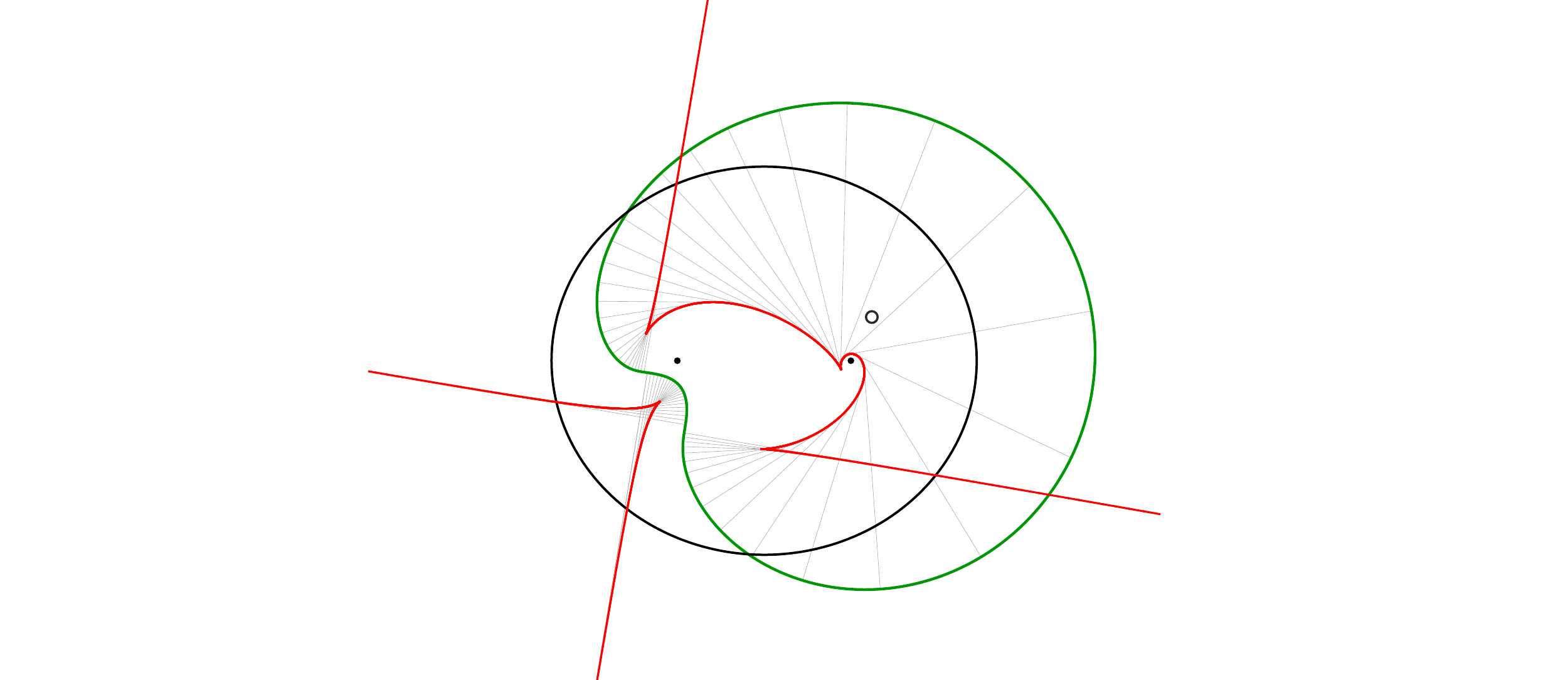
\caption{The 2nd caustics $\Gamma_2$ (red) with an involute $\Delta_2$ (green) for an elliptical billiard table $\gamma$ (black). The rays (gray) correspond to $C_2$, are normal to the wave front $\Delta_2$ and are tangent to $\Gamma_2$.  }
\label{fig:3curves}
\end{figure}

We show that $\Delta_n$ exists as a closed curve, possibly with cusps (in fact, there is a 1-parameter equidistant family of such curves).  To show that $\Delta_n$ has at least four  vertices, we use a theorem of  Chekhanov and Pushkar \cite{ChP}, stating that a planar cooriented closed wave front has at least four vertices, provided its Legendrian lift to the space of cooriented contact element in the plane  is a `Legendrian unknot', that is, is Legendrian isotopic to the Legendrian lift of a circle.

The rest of the article provides background information and details of these arguments. In the last section we present some generalizations of Theorem \ref{thm:main} to spherical and hyperbolic geometry, as well as ``projective billiards.''

\mn 

We present two conjectures; the first one is supported by experimental evidence, the  second one might be over-optimistic.

\begin{conjecture} \label{conj:one}
If $\gamma$ is an ellipse, then the caustic by reflection $\Gamma_n$ for a  light source inside $\gamma$ and different from a focus has exactly four cusps for every $n\ge 1$ (see Figure \ref{ellipse1}).
\end{conjecture}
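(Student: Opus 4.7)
One approach would be to exploit the complete integrability of the elliptic billiard, which plays no role in the proof of Theorem \ref{thm:main}. Fix an ellipse $\gamma$ with foci $F_1, F_2$, and an interior point $O$ distinct from either focus. Every directed line is tangent to a unique conic confocal with $\gamma$, and the billiard map preserves this confocal label. Parametrizing the rays from $O$ by $\theta \in S^1$ yields a smooth function $\lambda(\theta)$ assigning to each direction the parameter of the corresponding confocal conic $K_{\lambda(\theta)}$; all $n$-fold reflections of this ray remain tangent to $K_{\lambda(\theta)}$, giving an essentially closed-form description of the dual curve $C_n \subset \L$.

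I would pass to angle-action coordinates $(\phi, I)$ on the phase space in which the billiard map becomes the pure shift $(\phi, I) \mapsto (\phi + \omega(I), I)$. The ray locus $\sigma_0$ emanating from $O$ is a closed curve in $S^1 \times (a,b)$, generically a graph over the $I$-axis, and its $n$-th iterate $\sigma_n$ is obtained from $\sigma_0$ by the explicit twist $\phi \mapsto \phi + n\omega(I)$. I would then run the curve-shortening argument from the proof of Theorem \ref{thm:main} to reduce the cusp count of $\Gamma_n$ to the zero count of $F_n'' + F_n$, where $F_n: S^1 \to \R$ has vanishing constant and first Fourier modes. Integrability should yield an explicit description of $F_n$ in terms of Jacobi elliptic functions, making the count of zeros of $F_n'' + F_n$ amenable to complex-analytic tools, such as an argument principle on the associated elliptic curve.

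The main obstacle is upgrading the Sturm--Hurwitz lower bound of four to equality: one needs to rule out any extra inflection points of $C_n$. A natural strategy is a homotopy in $O$, starting from the center of $\gamma$, where $\Gamma_n$ inherits the $\Z_2 \times \Z_2$ symmetry of the ellipse and an explicit computation should confirm exactly four cusps (the base case $n=1$, an evolute-type construction, should be tractable directly). Along such a deformation, cusps are born and die in pairs at bifurcations, so the total count remains four throughout, provided no bifurcation occurs as $O$ varies over the interior of $\gamma$ minus $\{F_1, F_2\}$; the hope is that integrability rigidly constrains the family $\{F_n\}_O$ enough to preclude such bifurcations. The exclusion of the foci is natural, since a ray through $F_1$ reflects to a ray through $F_2$, causing the first caustic to degenerate to a point and exposing an exceptional stratum in the space of light sources where the generic cusp count can jump.
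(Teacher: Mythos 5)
This statement is not a theorem of the paper: it is stated as Conjecture \ref{conj:one} and explicitly left open, with the authors noting it is known only for $n=1$ (the catacaustic, via the result of Bruce--Giblin--Gibson that the orthotomic of an ellipse with respect to a non-focal interior point has exactly four vertices). Your text is accordingly a research program rather than a proof, and it does not close the conjecture. The concrete gap is exactly where you place it, but your proposed fix does not work as stated. First, the curve-shortening/Sturm--Hurwitz mechanism is structurally one-sided: the maximum principle says the number of inflections of $C_n$ does not \emph{increase} along the flow, so counting zeros of $F_n''+F_n$ for the deformed graph bounds the number of cusps of $\Gamma_n$ from \emph{below} by four; it cannot produce the upper bound, which is the entire content of the conjecture. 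Writing $F_n$ in terms of elliptic functions does not help here, because $F_n$ is the profile of the curve \emph{after} the flow has been run, not of $C_n$ itself, and the flow can destroy inflections.

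Second, the homotopy-in-$O$ argument is only as strong as the claim that no birth of a cusp pair occurs as $O$ sweeps the punctured interior, and you offer only the ``hope'' that integrability forbids this. That hope needs a mechanism: the paper's Conjecture \ref{conj:two} and Figure \ref{fig:4plus} show that for non-elliptical ovals extra cusps do appear, so parity of births and deaths alone proves nothing, and whatever rigidity you invoke must use the ellipse in an essential way (e.g., a disconjugacy or positivity statement for the specific family $\{F_n\}_O$, or a hyperosculation count for the osculating Kepler conics as in the $n=1$ case). The base case is also not established: at the center the $\Z_2\times\Z_2$ symmetry only forces the cusps to come in symmetric orbits, not to number exactly four, and the center may itself be a degenerate choice of source. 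In short, the reduction to confocal coordinates and the twist form of the billiard map are sensible starting points, but every step that would convert the known lower bound of four into the conjectured equality is missing.
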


This conjecture is only known to hold in the case of $n=1$ (the `catacaustic', see next section). 

\begin{figure}[ht]
\centering
\includegraphics[width=.3\textwidth]{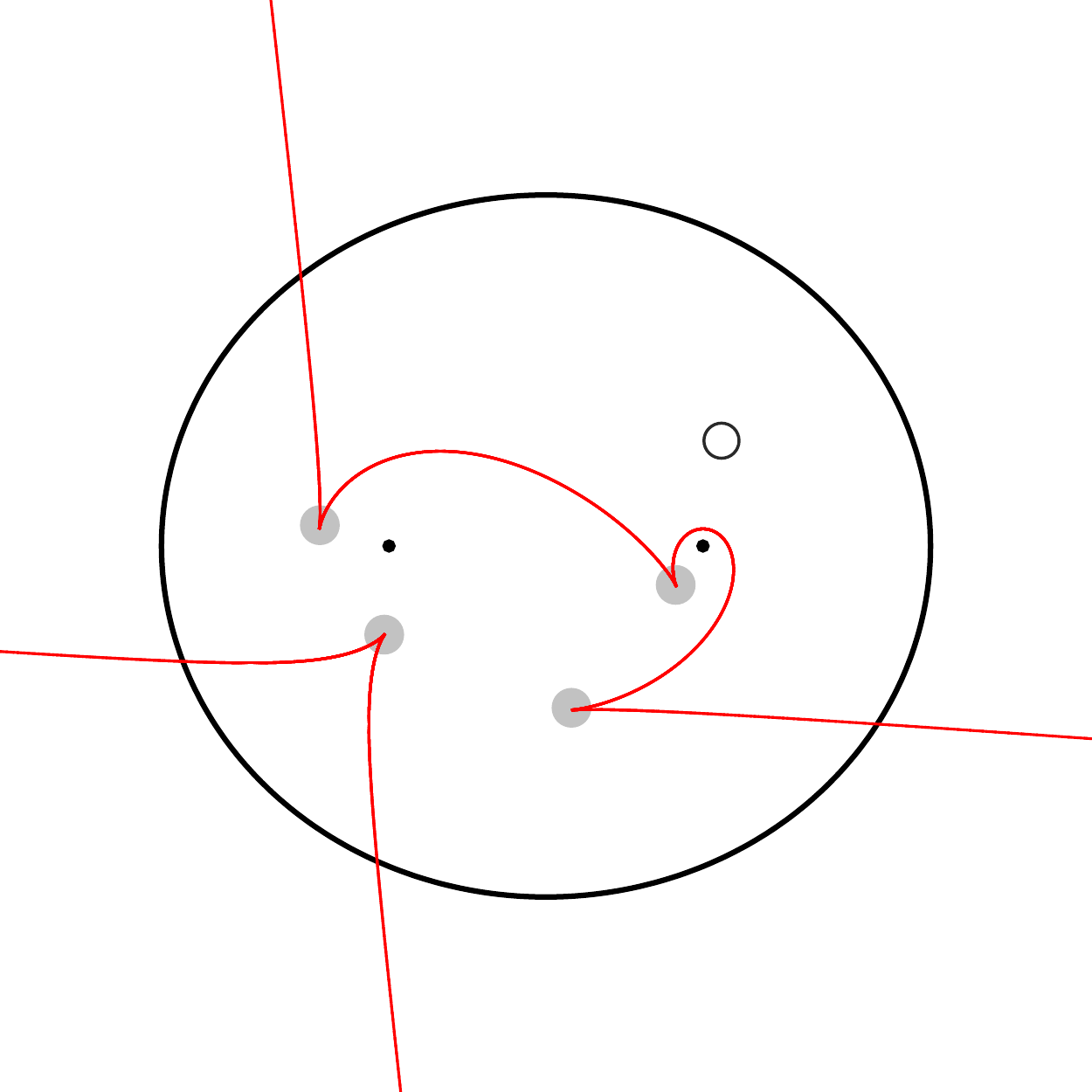}\quad
\includegraphics[width=.3\textwidth]{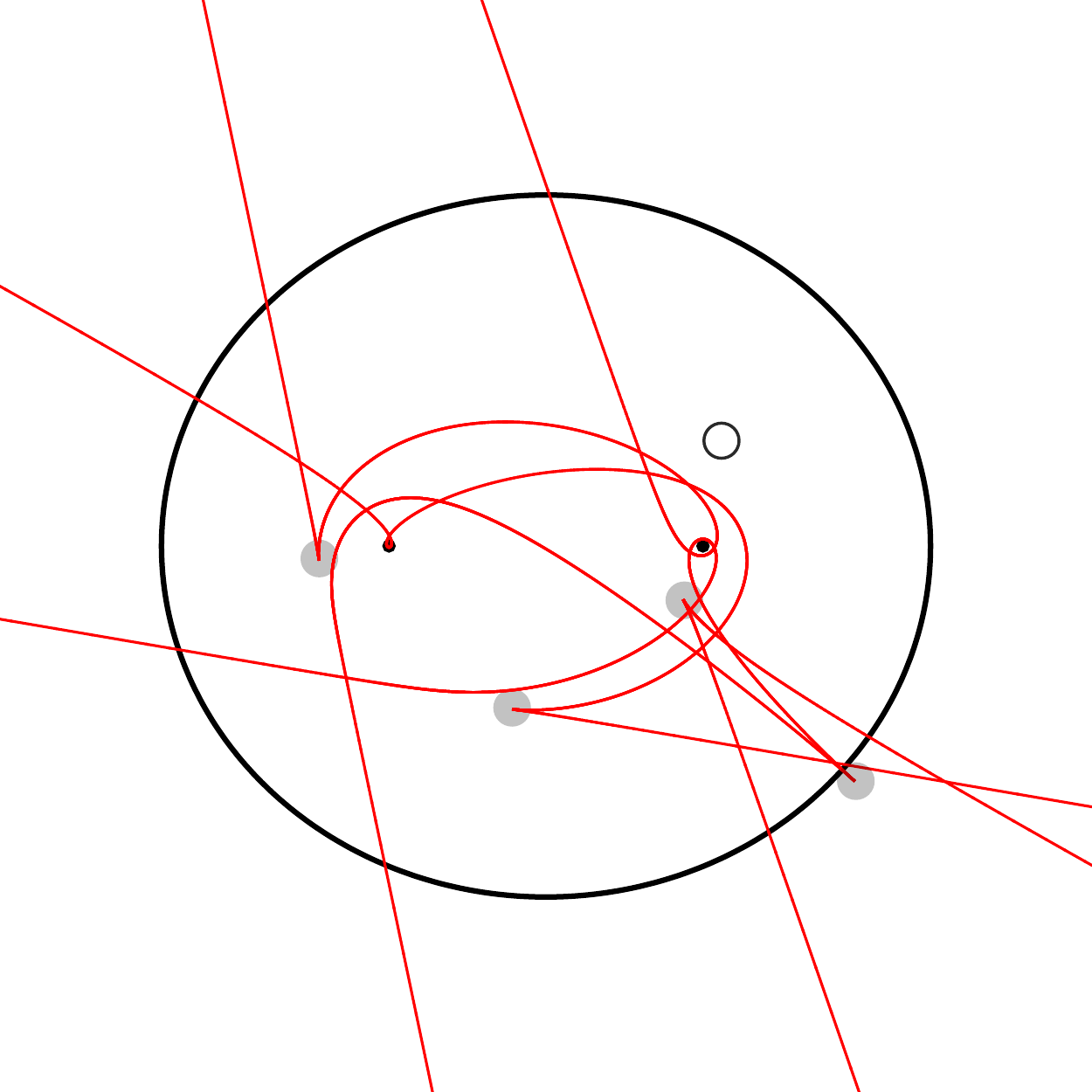}\quad
\includegraphics[width=.3\textwidth]{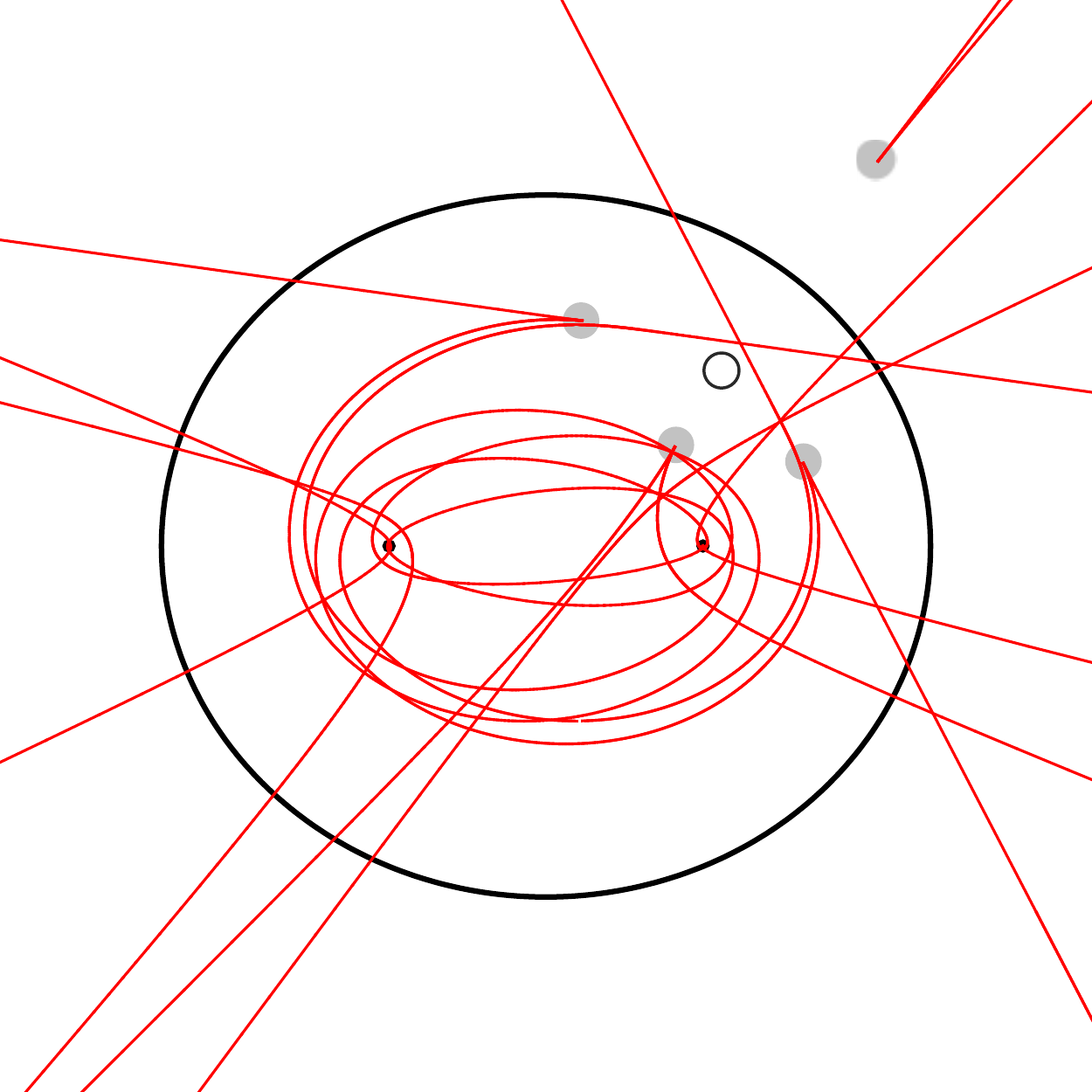}
\caption{The 2nd, 5th and 8th caustics by reflection in an ellipse, each with 4 cusps (marked by gray disks).}
\label{ellipse1}
\end{figure}

\begin{conjecture} \label{conj:two}
If $\gamma$ is not an ellipse then, for some choice of light source  inside $\gamma$  and some $n\ge 1$, the caustic by reflection $\Gamma_n$ has more than four cusps.
\end{conjecture}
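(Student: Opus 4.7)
The plan is to exhibit a witness triple $(\gamma,O,n)$ with $\gamma$ a non-elliptic oval, $O$ an interior light source, and $n\ge 1$, such that $\Gamma_n$ has at least six cusps. Using the sphere/cylinder encoding from the proof sketches of Theorem \ref{thm:main}, the cusps of $\Gamma_n$ correspond to the zeros of the spherical geodesic curvature $\kappa_n$ of the dual curve $C_n\subset\L$. So the conjecture reduces to producing, for every non-elliptic $\gamma$, some $(O,n)$ making $\kappa_n$ change sign more than four times.

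I would attack this in two complementary regimes. First, in the small-$n$ regime, I would study the catacaustic $\Gamma_1$ directly as a function of $O$ and of the curvature of $\gamma$, using the classical second-order catacaustic formula. The target is to show that the assumption ``$\Gamma_1$ has exactly four cusps for every choice of interior source $O$'' imposes a differential constraint on the curvature function of $\gamma$ whose only solutions are ellipses; this would be a catacaustic analogue of the classical isoptic/optical characterizations of ellipses, and would already settle the conjecture for many non-elliptic $\gamma$. Second, in the large-$n$ regime, I would invoke Birkhoff-type rigidity (e.g.\ the Avila--De Simoi--Kaloshin--Sorrentino local rigidity theorem): a non-elliptic $\gamma$ does not admit a full integrable foliation by caustics of the billiard ball map, so the iterated orbits through $O$ equidistribute in a complicated way. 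From this I would try to show that the curves $C_n$ develop finer and finer oscillations in $\L$, so that their spherical geodesic curvature $\kappa_n$ must acquire extra sign changes once $n$ is sufficiently large, via a Sturm--Hurwitz-type comparison against the normal form for an ellipse.

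The main obstacle is the stability of the cusp count under small perturbations: for any fixed $n$, the caustic $\Gamma_n$ depends continuously on $\gamma$, so ovals $C^\infty$-close to an ellipse have the same cusp count as the ellipse for all small $n$. One is therefore forced to take $n\to\infty$, where $\kappa_n$ depends on $\gamma$ through an $n$-fold composition of reflection maps whose global behaviour is notoriously hard to control. Worse, the conjecture is a genuine rigidity statement: one must rule out the possibility that some exotic non-elliptic oval has, by a hidden symmetry or algebraic accident, all caustics $\Gamma_n$ with exactly four cusps for all $n$ and all generic $O$. Establishing this seems at least as hard as Birkhoff's conjecture, which is precisely why the authors flag the statement as possibly over-optimistic. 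A pragmatic first step would be to produce an explicit analytic or numerical witness, say a small non-axisymmetric perturbation of a circle with a generic off-centre source, where for some moderate $n$ a sixth cusp of $\Gamma_n$ can be pinpointed; only with such a witness in hand would it be reasonable to attempt the full rigidity theorem.
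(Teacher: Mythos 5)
The statement you are addressing is Conjecture \ref{conj:two}, which the paper leaves \emph{open}: the authors explicitly flag it as possibly over-optimistic and offer only two numerical witnesses (Figure \ref{fig:4plus}), namely specific non-elliptical ovals whose first caustic has more than four cusps for one particular source. So there is no proof in the paper to compare against, and your proposal, by your own admission, does not supply one either: it is a research program in which neither branch is actually carried out. Concretely, in the small-$n$ branch you assert that the condition ``$\Gamma_1$ has exactly four cusps for every interior $O$'' should impose a differential constraint on the curvature of $\gamma$ whose only solutions are ellipses, but you exhibit no such constraint and give no reason to expect that it characterizes ellipses; moreover, as you yourself observe, this branch cannot work for ovals $C^\infty$-close to an ellipse, since for such ovals $\Gamma_1$ has exactly four nondegenerate cusps for every $O$, so at best it handles ovals far from elliptical and the general case is pushed back to $n\to\infty$.

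The large-$n$ branch has a more serious conceptual gap. The Birkhoff-rigidity results you invoke concern the \emph{dynamical} caustics of the billiard map (invariant curves of $T$ in the phase cylinder), whereas $\Gamma_n$ is the envelope of the $n$-th reflected beam, i.e.\ the projective dual of $C_n=T^n(C_0)$; these are different objects, and non-integrability of $T$ does not by itself say anything about sign changes of the spherical geodesic curvature of $C_n$. Your proposed Sturm--Hurwitz comparison would require quantitative control of the Fourier spectrum of $C_n$ after straightening it to a graph, and the heuristic ``non-integrable orbits equidistribute, hence $C_n$ oscillates more, hence extra inflections appear'' is precisely the step that is missing --- note also that Sturm--Hurwitz only bounds the number of zeros from \emph{below} by the order of the first nonvanishing harmonic, so you would need to show that low-order harmonics of the relevant function vanish or nearly vanish, which is not what equidistribution gives you. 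Your suggested pragmatic first step --- an explicit witness --- is essentially what the paper already provides in Figure \ref{fig:4plus}, but a witness for one $\gamma$ proves nothing for all non-elliptic $\gamma$; the conjecture remains open, and your sketch correctly diagnoses why but does not close the gap.
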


Figure \ref{fig:4plus} shows caustics  with $>4$ cusps for non-elliptical $\gamma$ . 

\begin{figure}[ht]
\centering
\includegraphics[width=.4\textwidth]{./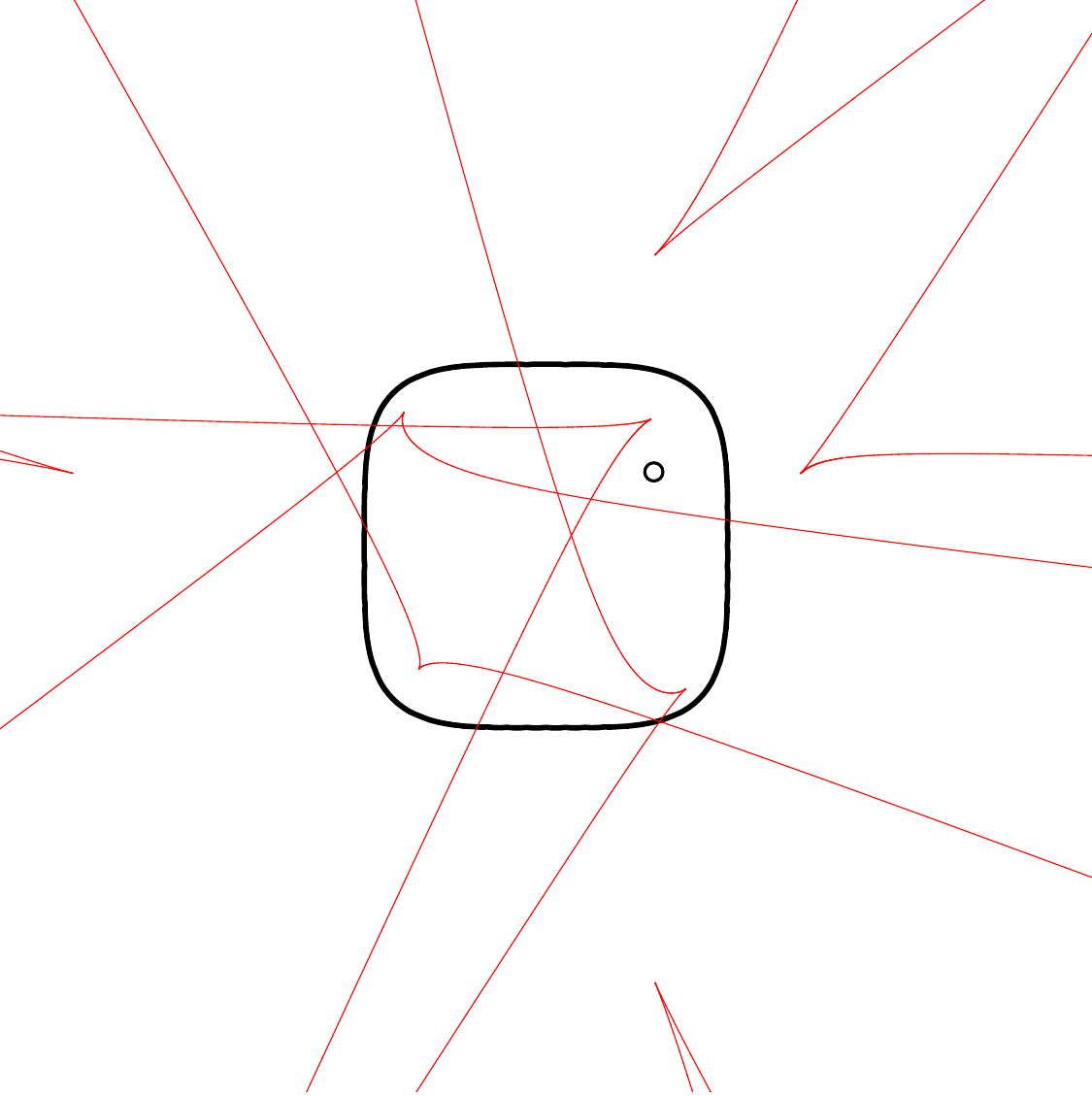}\hspace{.15\textwidth}
\includegraphics[width=.4\textwidth]{./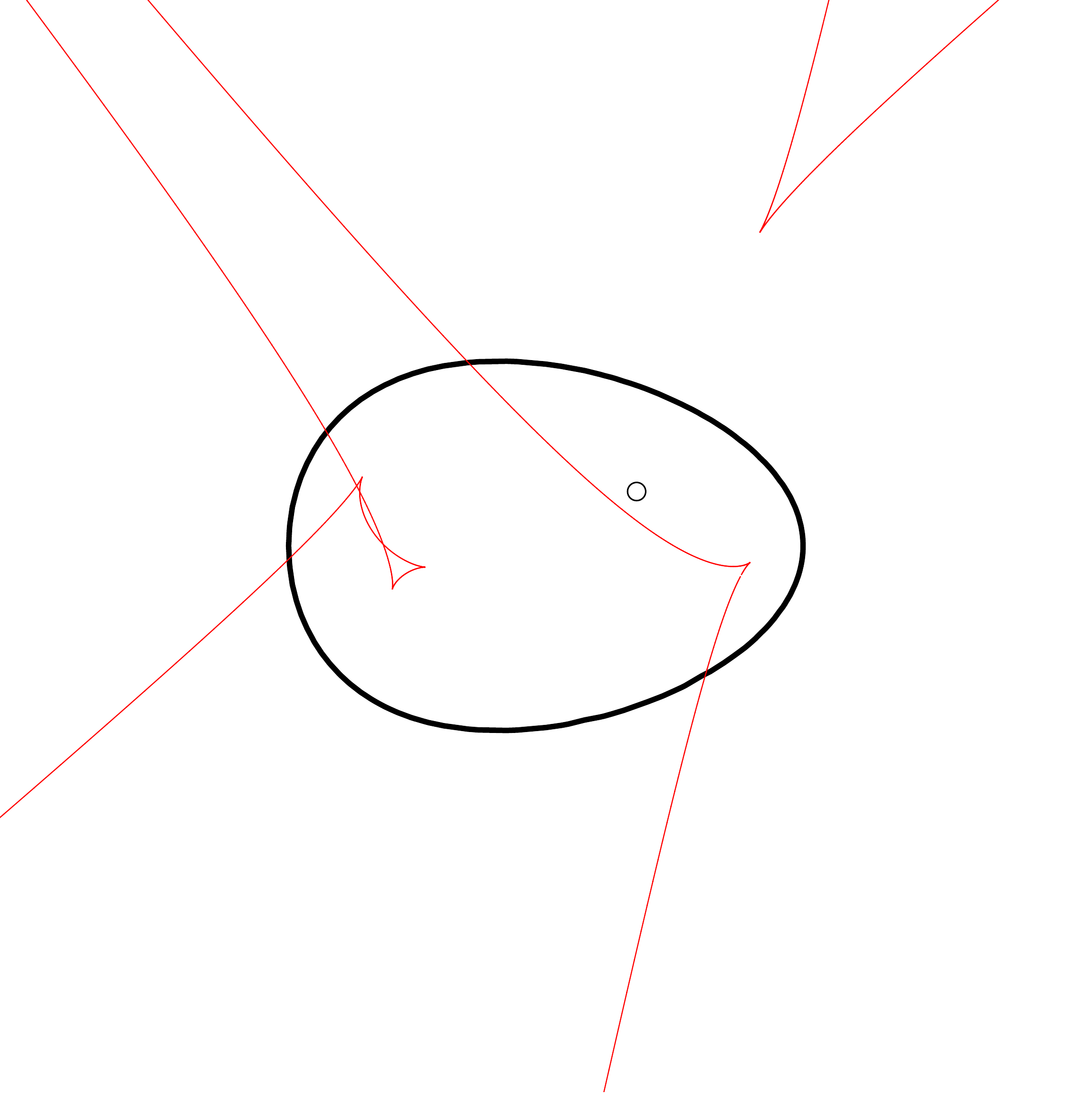}
\caption{First caustic by reflection with more than four cusps of non-elliptical ovals. Left: $x^4+y^4=1$, $O=(.6, .4)$. Right: $.5x^2+(1+.25x)y^2=1$, $O=(.5, .3)$ }
\label{fig:4plus}
\end{figure}

\subsection{Catacaustics} 
The first caustics by reflections, called {\em catacaustics}, are well studied. We give a brief summary of what is known about them, referring to \cite{Bo,BGG,Hu} and the literature cited in these articles.

A version of the string construction that recovers a billiard curve from a billiard caustic (see, e.g., \cite{Ta}) makes it possible to reconstruct the curve $\gamma$ from its first caustic by reflection $\Gamma_1$. This construction involves a parameter, the length of the string. See Figure \ref{string} (left).  

\begin{figure}[ht]
\centering
\def\svgwidth{.4\textwidth}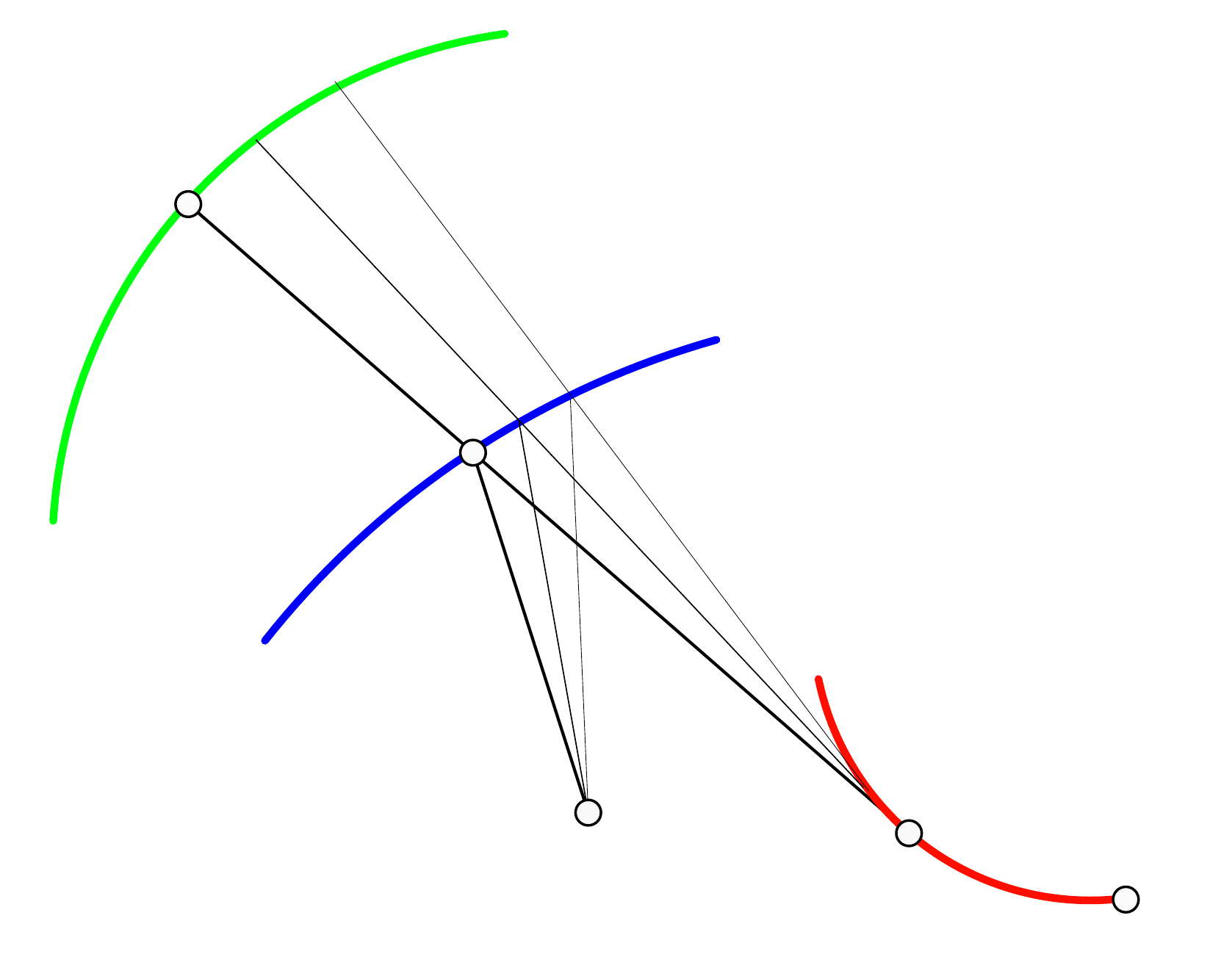
\hspace{.2\textwidth}
\def\svgwidth{.32\textwidth}
\begingroup%
  \makeatletter%
  \providecommand\color[2][]{%
    \errmessage{(Inkscape) Color is used for the text in Inkscape, but the package 'color.sty' is not loaded}%
    \renewcommand\color[2][]{}%
  }%
  \providecommand\transparent[1]{%
    \errmessage{(Inkscape) Transparency is used (non-zero) for the text in Inkscape, but the package 'transparent.sty' is not loaded}%
    \renewcommand\transparent[1]{}%
  }%
  \providecommand\rotatebox[2]{#2}%
  \newcommand*\fsize{\dimexpr\f@size pt\relax}%
  \newcommand*\lineheight[1]{\fontsize{\fsize}{#1\fsize}\selectfont}%
  \ifx\svgwidth\undefined%
    \setlength{\unitlength}{284.93767548bp}%
    \ifx\svgscale\undefined%
      \relax%
    \else%
      \setlength{\unitlength}{\unitlength * \real{\svgscale}}%
    \fi%
  \else%
    \setlength{\unitlength}{\svgwidth}%
  \fi%
  \global\let\svgwidth\undefined%
  \global\let\svgscale\undefined%
  \makeatother%
  \begin{picture}(1,1.03952063)%
    \lineheight{1}%
    \setlength\tabcolsep{0pt}%
    \put(0,0){\includegraphics[width=\unitlength,page=1]{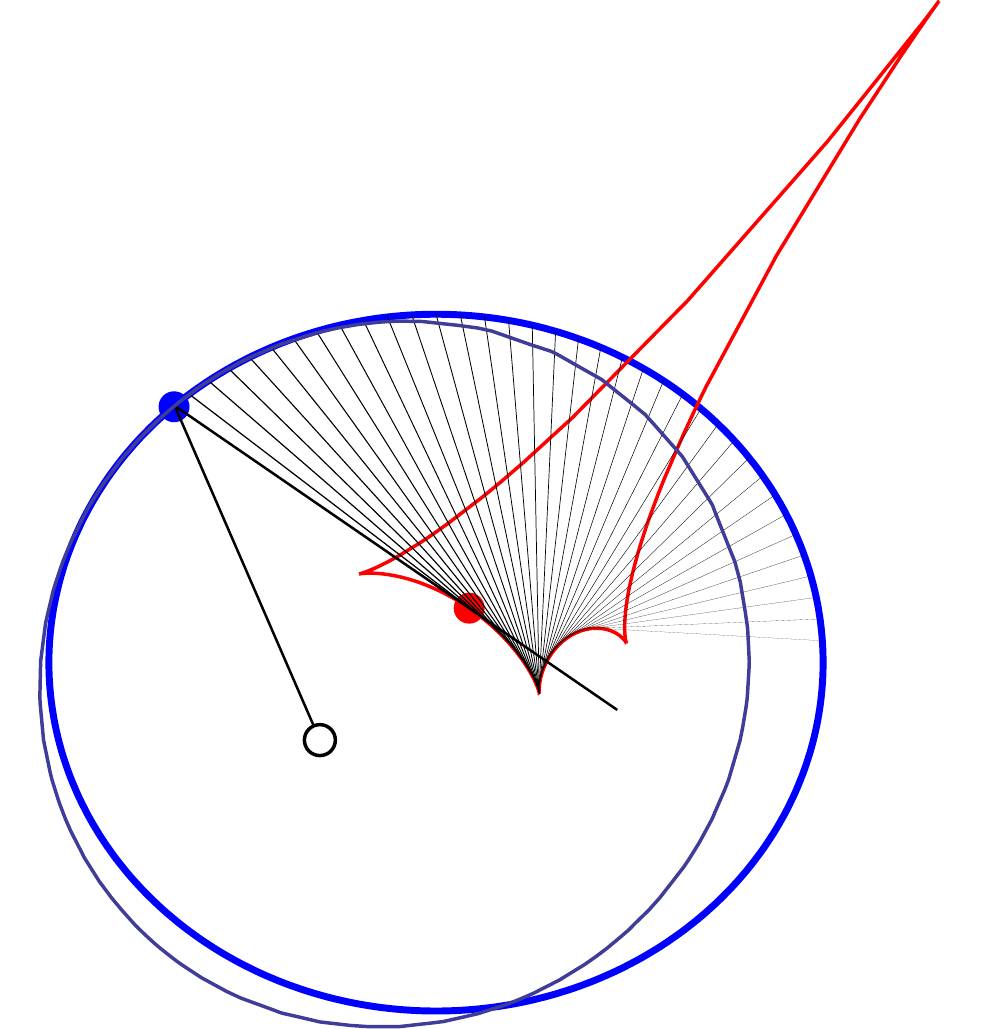}}%
    \put(0.24388128,0.21908658){\color[rgb]{0,0,0}\makebox(0,0)[lt]{\lineheight{1.25}\smash{\begin{tabular}[t]{l}\s$O$\end{tabular}}}}%
    \put(0.79189587,0.69761231){\color[rgb]{0,0,0}\makebox(0,0)[lt]{\lineheight{1.25}\smash{\begin{tabular}[t]{l}\s$\Gamma_1$\end{tabular}}}}%
    \put(0.39391405,0.35174725){\color[rgb]{0,0,0}\makebox(0,0)[lt]{\lineheight{1.25}\smash{\begin{tabular}[t]{l}\s$B$\end{tabular}}}}%
    \put(0.09226919,0.65970931){\color[rgb]{0,0,0}\makebox(0,0)[lt]{\lineheight{1.25}\smash{\begin{tabular}[t]{l}\s$X$\end{tabular}}}}%
    \put(0.77610297,0.10221895){\color[rgb]{0,0,0}\makebox(0,0)[lt]{\lineheight{1.25}\smash{\begin{tabular}[t]{l}\s$\gamma$\end{tabular}}}}%
    \put(-0.00090904,0.02799222){\color[rgb]{0,0,0}\makebox(0,0)[lt]{\lineheight{1.25}\smash{\begin{tabular}[t]{l}\s(b)\end{tabular}}}}%
  \end{picture}%
\endgroup%

\caption{(a) The curve $\gamma$ is the locus of points $X$ such that $|OX|+ |XB| + \stackrel{\smile}{|BA|}=const$ (point $A$ is fixed on $\Gamma_1$). The point $Z$ is the reflection of $O$ in the tangency line to $\gamma$ at   $X$. The locus of points $Z$ is the orthotomic curve $\Delta_1$, orthogonal to $BZ$ and whose evolute is $\Gamma_1$. (b) The catacaustic $\Gamma_1$ (red) is the locus of 2nd foci $B$ of the osculating Kepler conic (gray) to the curve $\gamma$ (blue) with 1st foci at $O$.
}
\label{string}
\end{figure}

The orthotomic curve $\Delta_1$ is an involute of the catacaustic $\Gamma_1$,  see  Figure \ref{string} (left), and $\Gamma_1$ is the evolute of $\Delta_1$, that is, the envelope of its normals. The  cusps of $\Gamma_1$ correspond to the vertices of $\Delta_1$.
 It is known that when $\gamma$ is an ellipse and $O$ is not one of its foci then $\Gamma_1$ has 4 vertices \cite{BGG}. 

 A Kepler conic is a conic with one focus fixed at the origin $O$. The curve $\gamma$ has a Kepler conic that has 3-point contact with it at every point, see \cite{BT}. The locus of the second foci of these osculating Kepler conics is the first caustic $\Gamma_1$ -- this follows from the optical properties of conics (a ray from one focus reflects to another focus). It follows that the cusps of the catacaustics $\Gamma_1$ correspond to the points where the Kepler conics hyperosculate the curve $\gamma$. 
 
 \paragraph{Computer graphics and animations.} Most figures in this article were made
using the computer program Mathematica. They are complemented with some
animations on the web page   \url{https://www.cimat.mx/~gil/caustics/}.

\paragraph{Acknowledgements:}  
GB acknowledges  support from the Shapiro visiting program in Penn State and   CONACYT Grant A1-S-4588. ST was supported by NSF grant DMS-2005444.

\section{Background material} 

\subsection{The  phase cylinder and the billiard ball map}\label{sect:pc}

This section contains some standard material on mathematical billiards, see, e.g., \cite{Ta}.

Denote by $\L$ the space of oriented lines in $\R^2$. We use the `cylinder model' of $\L$, with  coordinates $(\alpha,p)$ defined as follows: $\alpha\in S^1=\R/2\pi\Z$ is the direction of the line and $p\in \R$ is the signed distance from the oriented line to the origin $O$ (which we choose to be the center of the initial beam of light). The sign of $p$ is defined by the right-hand rule, see Figure \ref{lines}.  Thus $\L$ is an infinite cylinder.

\begin{figure}[ht]
\centering
\def\svgwidth{\textwidth}
\begingroup%
  \makeatletter%
  \providecommand\color[2][]{%
    \errmessage{(Inkscape) Color is used for the text in Inkscape, but the package 'color.sty' is not loaded}%
    \renewcommand\color[2][]{}%
  }%
  \providecommand\transparent[1]{%
    \errmessage{(Inkscape) Transparency is used (non-zero) for the text in Inkscape, but the package 'transparent.sty' is not loaded}%
    \renewcommand\transparent[1]{}%
  }%
  \providecommand\rotatebox[2]{#2}%
  \newcommand*\fsize{\dimexpr\f@size pt\relax}%
  \newcommand*\lineheight[1]{\fontsize{\fsize}{#1\fsize}\selectfont}%
  \ifx\svgwidth\undefined%
    \setlength{\unitlength}{525bp}%
    \ifx\svgscale\undefined%
      \relax%
    \else%
      \setlength{\unitlength}{\unitlength * \real{\svgscale}}%
    \fi%
  \else%
    \setlength{\unitlength}{\svgwidth}%
  \fi%
  \global\let\svgwidth\undefined%
  \global\let\svgscale\undefined%
  \makeatother%
  \begin{picture}(1,0.25714286)%
    \lineheight{1}%
    \setlength\tabcolsep{0pt}%
    \put(0,0){\includegraphics[width=\unitlength,page=1]{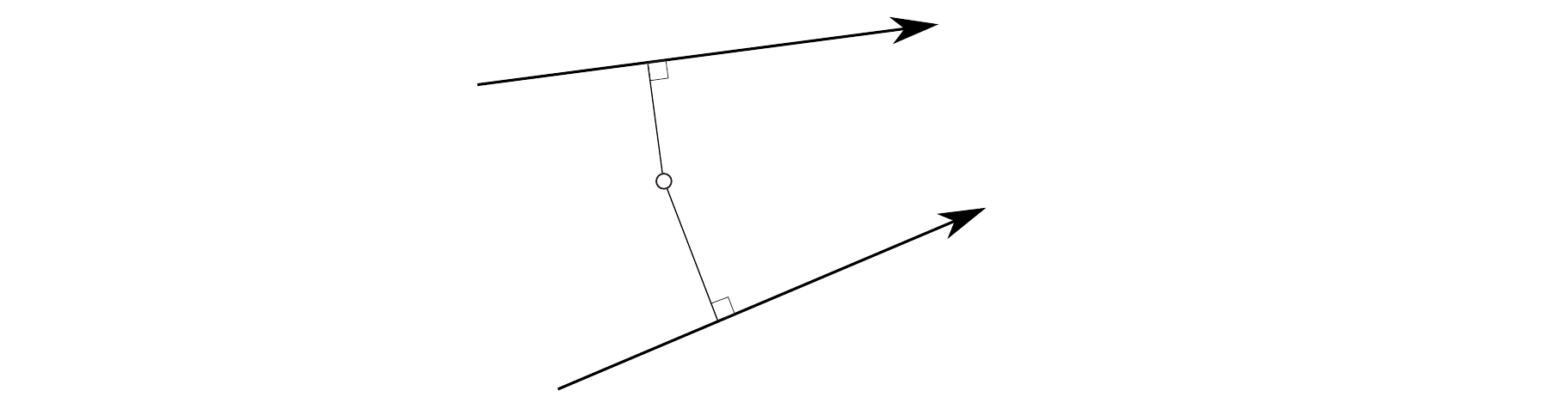}}%
    \put(0.43467243,0.17593626){\color[rgb]{0,0,0}\makebox(0,0)[lt]{\lineheight{1.25}\smash{\begin{tabular}[t]{l}\s $p<0$\end{tabular}}}}%
    \put(0.44776401,0.10175066){\color[rgb]{0,0,0}\makebox(0,0)[lt]{\lineheight{1.25}\smash{\begin{tabular}[t]{l}\s $p>0$\end{tabular}}}}%
    \put(0.61249969,0.23593934){\color[rgb]{0,0,0}\makebox(0,0)[lt]{\lineheight{1.25}\smash{\begin{tabular}[t]{l}\s $\alpha$\end{tabular}}}}%
    \put(0.63868284,0.11702417){\color[rgb]{0,0,0}\makebox(0,0)[lt]{\lineheight{1.25}\smash{\begin{tabular}[t]{l}\s $\alpha$\end{tabular}}}}%
    \put(0.38339709,0.12466093){\color[rgb]{0,0,0}\makebox(0,0)[lt]{\lineheight{1.25}\smash{\begin{tabular}[t]{l}\s $O$\end{tabular}}}}%
  \end{picture}%
\endgroup%

\caption{The coordinates $(\alpha,p)$ of an oriented line in $\R^2$.}
\label{lines}
\end{figure}

The space $\L$ of oriented lines in $\R^2$ admits an area form, unique up to scale, invariant under the Euclidean group action. In coordinates, this area form is $\omega=d\alpha\wedge dp.$

The {\em phase cylinder} of the billiard system inside an oval $\gamma\subset\R^2$ is  the set $M\subset \L$ of oriented lines intersecting  $\gamma$. It is a bounded cylinder whose two  boundary components  correspond to the lines tangent to $\gamma$, one component for each orientation.  The ``equator'' $p=0$ corresponds to the lines through $O$, see Figure \ref{fig:pc}.

\begin{figure}[ht]
\centering
\def\svgwidth{.3\textwidth}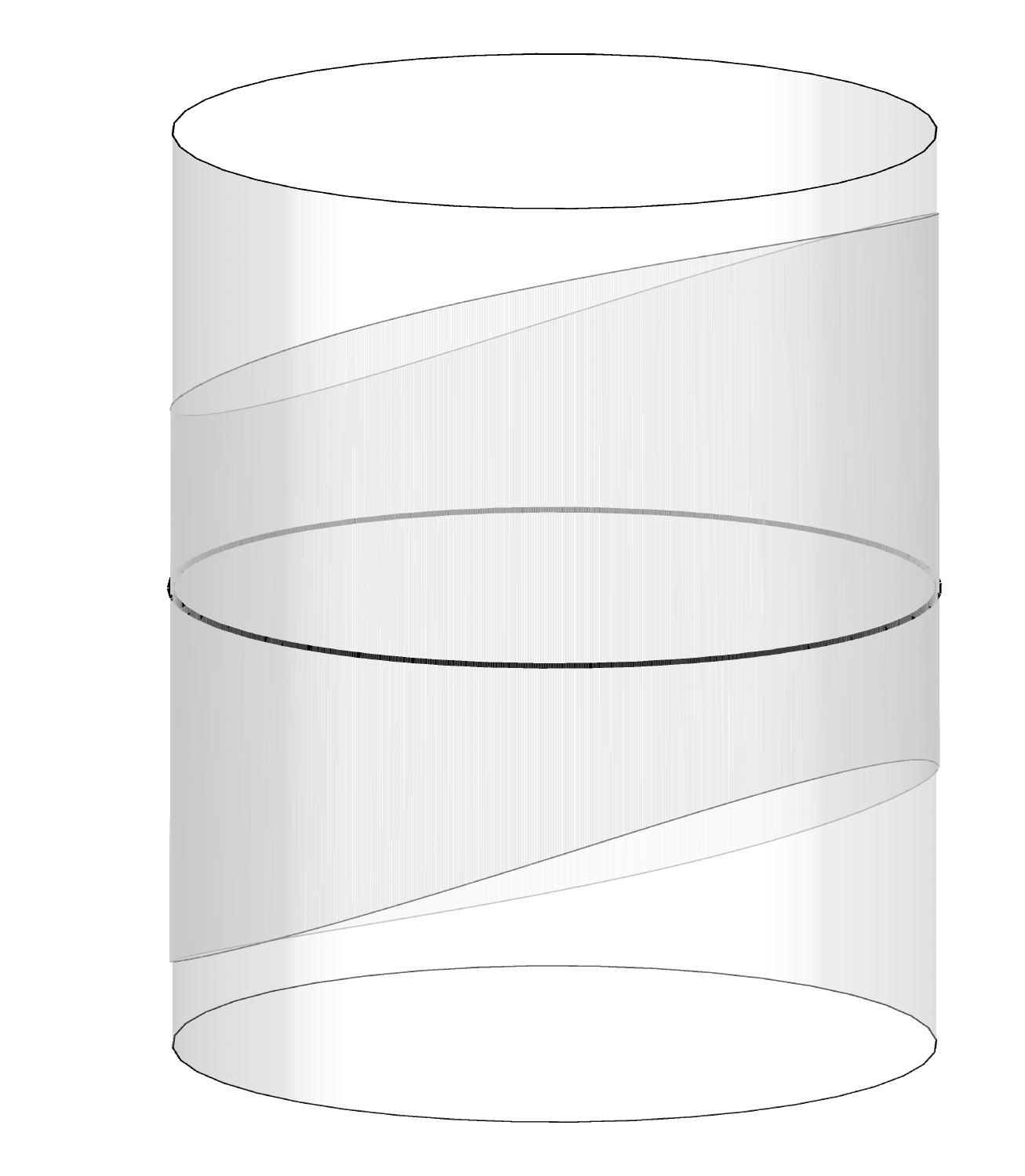
\hspace{.2\textwidth}
\def\svgwidth{.4\textwidth}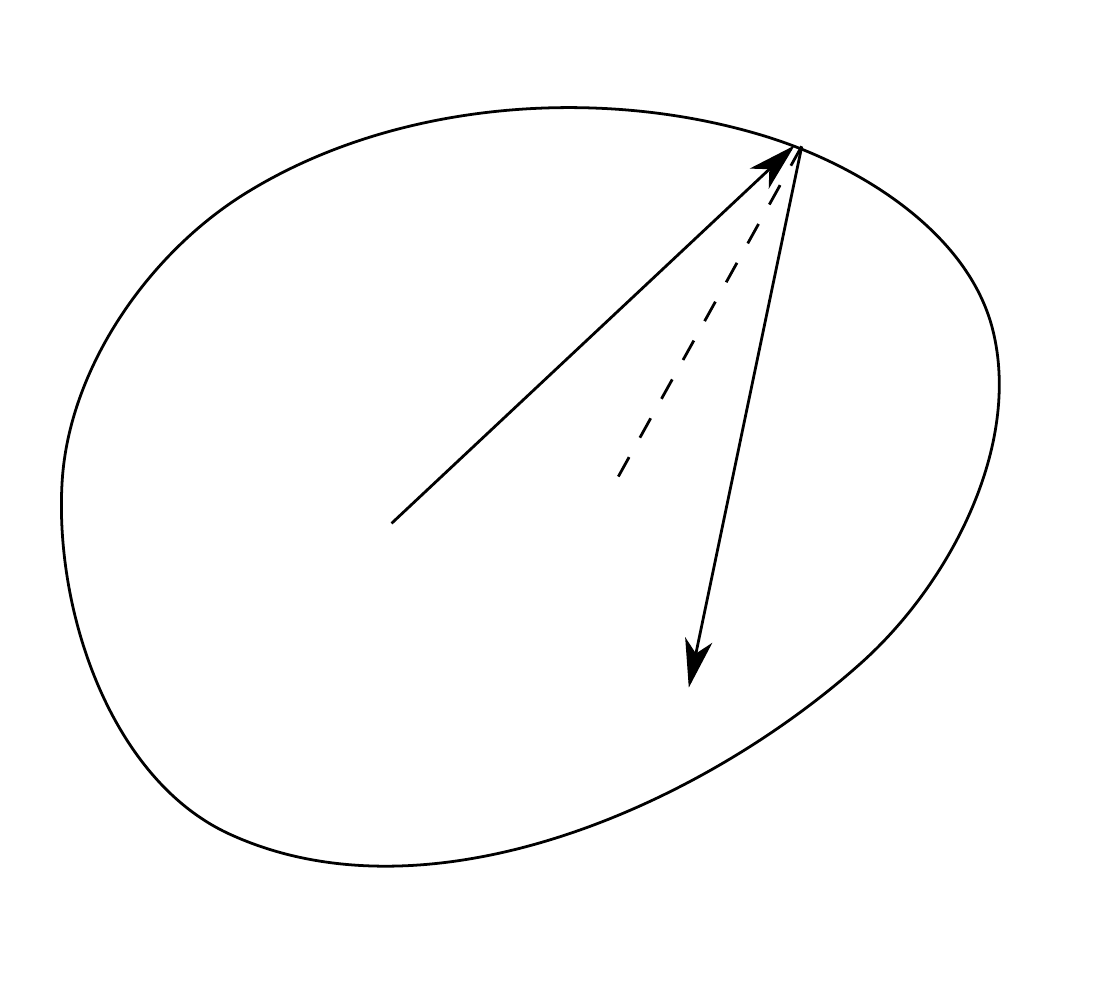
\caption{Left: the phase cylinder $M\subset \L$. Right: the billiard ball map $T:M\to M$. }
\label{fig:pc}
\end{figure}

The billiard ball map $T:M\to M$, sending  an incoming ray to the reflected one, is an area preserving transformation, that is, $T^*\omega=\omega$.  
Since $\omega=-d(pd\alpha)$, the differential 1-form $T^*(pd\alpha)-pd\alpha$ is closed. In fact, more is true: as we will now show, it is {\it exact}, that is, $T^*(pd\alpha)-pd\alpha=dF$ for some function $F:M\to\R$. An example of an area preserving, but non-exact, map   is   $(\alpha,p)\mapsto (\alpha, p+1).$ 

\begin{proposition}\label{prop:exact} The billiard ball map $T:M\to M$ is exact. 
\end{proposition}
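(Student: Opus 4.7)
My plan is to exhibit an explicit primitive $F$ for $T^*(p\,d\alpha) - p\,d\alpha$. The key geometric fact driving the argument is that the reflection point lies on both the incoming ray $\ell$ and the reflected ray $T(\ell)$, and that the law of reflection preserves the tangential component of the direction vector along $\gamma$.

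First I would parametrize $\gamma$ by arc length $s \mapsto X(s)$, with unit tangent $\tau(s) = X'(s)$, and set $v(\alpha) = (\cos\alpha, \sin\alpha)$ and $n(\alpha) = (-\sin\alpha, \cos\alpha)$, so that every point $X$ on the line with coordinates $(\alpha, p)$ satisfies $p = X \cdot n(\alpha)$. For any smooth function $s \colon M \to S^1$ with $X(s(\ell)) \in \ell$, differentiating $X(s) \cdot v(\alpha)$ and using $v'(\alpha) = n(\alpha)$ yields the identity
\[
p\,d\alpha \;=\; d\bigl(X(s) \cdot v(\alpha)\bigr) \;-\; \bigl(\tau(s) \cdot v(\alpha)\bigr)\,ds. \qquad (\ast)
\]

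Next I would take $s(\ell)$ to be the reflection point of $\ell$, which is simultaneously the exit point of $\ell$ and the entry point of $T(\ell)$. Applying $(\ast)$ to $T(\ell)$, using the same point $X(s)$ on the reflected line, with direction $\alpha' = \alpha \circ T$ and signed distance $p' = p \circ T$, gives an analogous expression for $p'\,d\alpha' = T^*(p\,d\alpha)$. Subtracting, the reflection law $v(\alpha') = v(\alpha) - 2\bigl(v(\alpha) \cdot \nu(s)\bigr)\nu(s)$, where $\nu(s)$ denotes the unit normal at $X(s)$, implies $\tau(s) \cdot v(\alpha') = \tau(s) \cdot v(\alpha)$, so the $ds$-terms cancel and one obtains
\[
T^*(p\,d\alpha) - p\,d\alpha \;=\; dF, \qquad F(\ell) := X(s(\ell)) \cdot \bigl(v(\alpha') - v(\alpha)\bigr).
\]

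The main subtlety is the bookkeeping in the second step, namely that the same function $s(\ell)$ must appear in both applications of $(\ast)$ — this is exactly the geometric statement that the reflection occurs at a single shared point on $\gamma$. A purely topological alternative avoids computing $F$ at all: the form is closed because $\omega = -d(p\,d\alpha)$ is $T$-invariant, and it vanishes on the boundary $\partial M$ since the tangent lines to $\gamma$ are fixed by $T$; as $\partial M$ generates $H_1(M;\R)$, the form is automatically exact. I would nevertheless prefer the direct calculation above, since the explicit primitive $F$ is likely to be useful later in the paper.
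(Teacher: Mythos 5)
Your proof is correct, but it takes a genuinely different route from the paper's. You exhibit an explicit primitive directly: the identity $p\,d\alpha = d\bigl(X(s)\cdot v(\alpha)\bigr) - \bigl(\tau(s)\cdot v(\alpha)\bigr)\,ds$ holds for any smooth choice of point $X(s(\ell))\in\ell$, and by taking $s$ to be the reflection point (which lies on both $\ell$ and $T\ell$) and using that reflection preserves the tangential component of the direction vector, the $ds$-terms cancel and $F=X(s)\cdot\bigl(v(\alpha')-v(\alpha)\bigr)$ is a primitive of $T^*(p\,d\alpha)-p\,d\alpha$. The paper instead factors the argument through the chord-length generating function: Lemma \ref{lm:exact} shows $T^*(\cos\varphi\,dt)-\cos\varphi\,dt=dL$ with $L$ the chord length, and Lemma \ref{lm:cohom} shows $p\,d\alpha$ and $\cos\varphi\,dt$ are cohomologous, which requires both a closedness computation and the Cauchy--Crofton formula to kill the period over a boundary circle. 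Your calculation effectively fuses these two steps and dispenses with Cauchy--Crofton; what the paper's route buys is the explicit appearance of the classical generating function $L$, a standard object in billiard theory. Your ``topological alternative'' is also valid and arguably the cleanest of all: $T^*\omega=\omega$ makes the form closed, and since $T$ fixes $\partial M$ pointwise the restriction of $T^*(p\,d\alpha)-p\,d\alpha$ to a boundary circle vanishes, so its period over a generator of $H_1(M;\R)$ is zero; this single observation replaces the paper's Cauchy--Crofton step. Two minor points: your sign convention $p=X\cdot n(\alpha)$ is opposite to the paper's $p=\det(\gamma(t),e^{i\alpha})$, which is harmless; and one should note (as the paper also implicitly does) that $T$ and the reflection-point map $s$ extend smoothly to $\partial M$, where $T$ is the identity and $F$ vanishes.
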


In order to  prove Proposition \ref{prop:exact},  consider another description of the phase cylinder, as the set of unit vectors with a foot point on $\gamma$, pointing inwards, the initial position and velocity of the billiard ball. These unit vectors are in one-to-one correspondence with the oriented lines that they generate.   Let $\gamma(t)$ be an arc length  counterclockwise parameterization and  $\varphi$ be the angle between the tangent $\gamma'(t)$ and the unit vector. See Figure \ref{fig:ellip}a.

\begin{figure}[ht]
\centering
\def\svgwidth{.4\textwidth}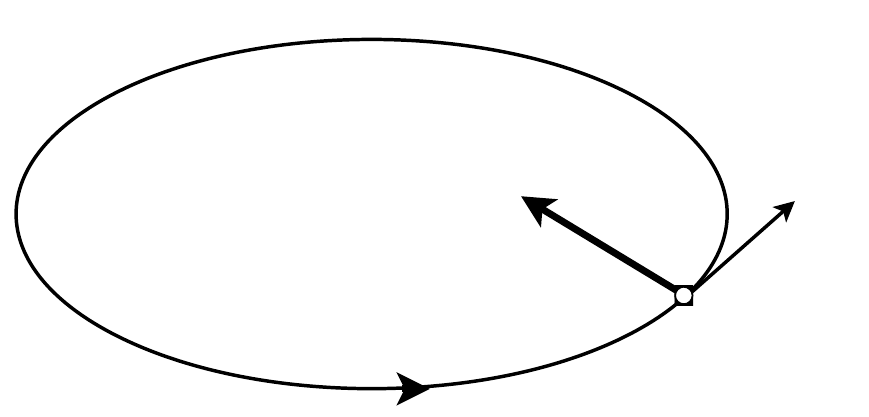\hspace{.15\textwidth}
\def\svgwidth{.39\textwidth}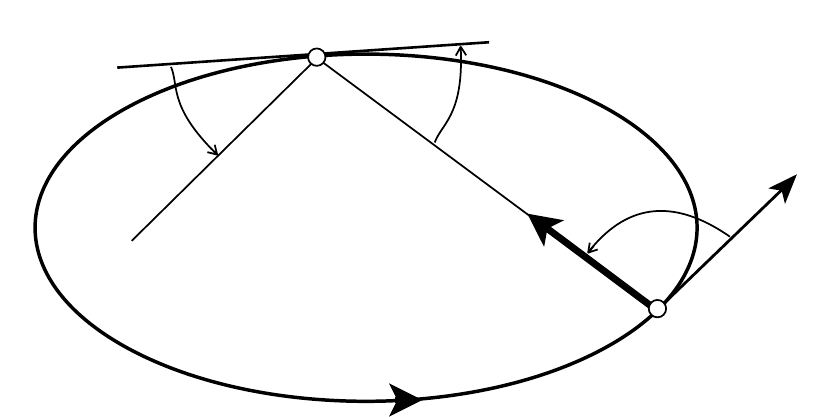
\caption{(a) The coordinates $(t, \varphi)$ on $M$ and their relation to $(\alpha,p).$ (b) The generating function $L$ of the billiard ball map $T$.
}
\label{fig:ellip}
\end{figure}

Consider the differential 1-form $\cos\varphi\, dt$. Let $L=|\gamma(t_1)-\gamma(t)|$ be the distance between the intersection points of a line   with $\gamma$. See Figure \ref{fig:ellip}b.

\begin{lemma} \label{lm:exact} 
$T^* (\cos\varphi\ dt)-\cos\varphi\ dt = dL$.
\end{lemma}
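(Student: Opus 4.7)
The plan is to view $L$ as a two-variable function of the pair of impact parameters $(t,t_1)$, and to show that its two partial derivatives are exactly $-\cos\varphi$ and $\cos\varphi_1$, where $\varphi_1$ is the angle that the reflected ray makes with $\gamma'(t_1)$. Granting this, the identity follows immediately: interpreting $\varphi$ and $t$ as coordinate functions on $M$, we have $T^*(\cos\varphi\,dt)=(\cos\varphi\circ T)\,d(t\circ T)=\cos\varphi_1\,dt_1$, so that
\[dL=\frac{\partial L}{\partial t}\,dt+\frac{\partial L}{\partial t_1}\,dt_1=-\cos\varphi\,dt+\cos\varphi_1\,dt_1=T^*(\cos\varphi\,dt)-\cos\varphi\,dt.\]

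The first partial is a direct computation. Differentiating $L^{2}=(\gamma(t_1)-\gamma(t))\cdot(\gamma(t_1)-\gamma(t))$ and using $|\gamma'(t)|=1$ gives
\[\frac{\partial L}{\partial t}=-\,u\cdot \gamma'(t),\qquad \frac{\partial L}{\partial t_1}=u\cdot\gamma'(t_1),\]
where $u=(\gamma(t_1)-\gamma(t))/L$ is the unit direction of the chord, pointing from $\gamma(t)$ toward $\gamma(t_1)$. By definition $u$ makes angle $\varphi$ with $\gamma'(t)$, so $u\cdot\gamma'(t)=\cos\varphi$, which gives $\partial L/\partial t=-\cos\varphi$. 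This handles the first formula.

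For the second formula, the reflection law is exactly what is needed. At the impact point $\gamma(t_1)$, the outgoing unit direction $u'$ is obtained from the incoming direction $u$ by reflecting the normal component and preserving the tangential component; since the tangent $\gamma'(t_1)$ is orthogonal to the normal, $u'\cdot\gamma'(t_1)=u\cdot\gamma'(t_1)$. By the definition of $\varphi_1$ as the angle the reflected ray makes with $\gamma'(t_1)$, the left-hand side equals $\cos\varphi_1$, so $u\cdot\gamma'(t_1)=\cos\varphi_1$, giving $\partial L/\partial t_1=\cos\varphi_1$. Combining the two partial derivative computations with the pullback identity above yields the lemma.

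The only real pitfall is bookkeeping with sign conventions: one must consistently orient the chord from $\gamma(t)$ to $\gamma(t_1)$, measure both $\varphi$ and $\varphi_1$ from the forward tangent into the interior of $\gamma$, and interpret $T^*$ correctly on the coordinate 1-form $dt$. Once the conventions agree with those of Figure~\ref{fig:ellip}, no further geometric input beyond the optical reflection law is required, and the argument is essentially a one-line verification of a twist-map generating function.
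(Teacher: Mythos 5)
Your proof is correct and follows the same route as the paper: both establish the generating-function identities $\partial L/\partial t=-\cos\varphi$ and $\partial L/\partial t_1=\cos\varphi_1$ and then read off $dL=\cos\varphi_1\,dt_1-\cos\varphi\,dt=T^*(\cos\varphi\,dt)-\cos\varphi\,dt$. The only difference is that the paper states the partial-derivative formulas without derivation, whereas you supply the (correct) computation via $L^2$ and the reflection law.
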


\begin{proof}
One has: $T(t,\varphi)=(t_1, \varphi_1)$ and 
$$
\frac{\partial L(t,t_1)}{\partial t} = - \cos\varphi,\ \frac{\partial L(t,t_1)}{\partial t_1} =  \cos\varphi_1,
$$
that is,
$
dL= \cos\varphi_1\ dt_1 - \cos\varphi\ dt,
$
as needed. \end{proof}

Two differential 1-forms are cohomologous if their difference is the differential of a function.

\begin{lemma} \label{lm:cohom}
The 1-form $p\, d\alpha$ is cohomologous to $\cos\varphi\ dt$.
\end{lemma}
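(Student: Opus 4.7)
The plan is to write down an explicit function $F\colon M\to\R$ for which $p\, d\alpha - \cos\varphi\, dt = dF$. As a conceptual sanity check first: both 1-forms are (up to sign) primitives of the natural symplectic form on $\L$ pulled back to $M$, so their difference is automatically closed; the real content of the lemma is exactness, which will follow from exhibiting the right primitive.

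To set up coordinates, parametrize $\gamma(t)=(x(t),y(t))$ by arc length with tangent angle $\theta(t)$, so $\gamma'(t)=(\cos\theta,\sin\theta)$. Under the identification of $(t,\varphi)\in M$ with the inward unit vector at $\gamma(t)$ making angle $\varphi$ with $\gamma'(t)$, the associated oriented line has direction angle $\alpha=\theta(t)+\varphi$, and its signed distance from the origin $O$ works out to $p = x\sin\alpha - y\cos\alpha$ with the right-hand-rule convention.

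I would try the candidate
\[
F(t,\varphi) \;:=\; -\gamma(t)\cdot(\cos\alpha,\sin\alpha) \;=\; -x\cos\alpha - y\sin\alpha,
\]
i.e.\ minus the projection of the basepoint $\gamma(t)$ onto the direction of the line. Geometrically, $F$ measures the signed distance, along the oriented line, from the foot of the perpendicular dropped from $O$ to the point $\gamma(t)$. Differentiating, using $dx=\cos\theta\, dt$ and $dy=\sin\theta\, dt$, yields
\begin{align*}
dF &= -\cos\alpha\, dx - \sin\alpha\, dy + (x\sin\alpha - y\cos\alpha)\, d\alpha \\
   &= -(\cos\alpha\cos\theta + \sin\alpha\sin\theta)\, dt + p\, d\alpha \\
   &= -\cos(\alpha-\theta)\, dt + p\, d\alpha \\
   &= -\cos\varphi\, dt + p\, d\alpha,
\end{align*}
which rearranges to the claimed identity.

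The main (and only) obstacle is guessing the right $F$. Once one recognizes its geometric meaning — the signed position of $\gamma(t)$ along the oriented line relative to the foot of perpendicular from $O$ — the computation is mechanical. If the sign convention for $p$ in the paper turns out to be opposite to the one used here, the same argument goes through with $F$ replaced by $-F$.
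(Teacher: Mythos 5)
Your proof is correct, and it takes a genuinely different route from the paper. The paper first checks that $p\,d\alpha-\cos\varphi\,dt$ is closed (via $d\alpha\wedge dp=\sin\varphi\,d\varphi\wedge dt$) and then establishes exactness by computing the period over the boundary circle $\varphi=0$ of $M$, which requires the Cauchy--Crofton formula to identify $\int p\,d\alpha$ over that circle with the perimeter of $\gamma$. You instead exhibit an explicit global primitive $F(t,\varphi)=-\gamma(t)\cdot(\cos\alpha,\sin\alpha)$ with $\alpha=\theta(t)+\varphi$; your differentiation is right (with the paper's convention $p=\det(\gamma(t),e^{i\alpha})=x\sin\alpha-y\cos\alpha$, so your final sign caveat is unnecessary), and the one point worth stating explicitly is that $F$ is single-valued on the cylinder $M$ --- which it is, since it depends only on the foot point $\gamma(t)$ and the direction $e^{i\alpha}$ of the line, both globally defined on $M$. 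Your approach buys a shorter, self-contained argument that makes closedness and exactness simultaneous and attaches a geometric meaning to the primitive (the signed position of $\gamma(t)$ along the line relative to the foot of the perpendicular from $O$); what the paper's detour buys is the Cauchy--Crofton identity itself, i.e., the observation that the common period of the two forms over a generator of $H_1(M)$ is the perimeter of $\gamma$, which is of independent interest but not logically needed once one has your $F$.
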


\begin{proof} 
Using complex notation, see Figure \ref{fig:ellip}a, one has
$$
e^{i(\alpha-\varphi)}=\gamma'(t),\ p=\det( \gamma(t), e^{i\alpha} ).
$$
Differentiating  these equations, we get
$$
d\alpha\equiv d\varphi\ (\mbox{mod }dt),\ dp\equiv \sin\varphi\, dt\ (\mbox{mod }d\alpha),
$$ so 
$$
d\alpha\wedge dp=\sin\varphi\, d\varphi \wedge dt=-d(\cos\varphi\ dt),
$$
hence $p d\alpha - \cos\varphi\ dt$ is closed. 

To show that $p\, d\alpha - \cos\varphi\,dt$ is exact it suffices to show that its integral along a non-contractible closed curve in $M$ vanishes. 
As such a curve take $C$, the boundary component of the phase cylinder $M$ given by $\varphi =0$. Clearly, $\int_C \cos\varphi\,dt$ equals the perimeter of $\gamma$.

On the other hand, by the Cauchy-Crofton formula, this perimeter equals $\pi$ times the average length of the orthogonal projection of $\gamma$ on a line, that is,  $\int_C p d\alpha$. 
This implies the result. 
\end{proof}

\noindent {\em  Proof  of Proposition \ref{prop:exact}.} Lemma \ref{lm:exact} shows that $T$  preserves the integral $\int_C\cos\varphi\ dt$, and Lemma \ref{lm:cohom} shows that $\int_C\cos\varphi\ dt=\int_C p\,d\alpha$, hence $T$ preserves the integral $\int_C p\,d\alpha$, i.e., it is exact.  \qed

\mn 

The {\em signed area} enclosed by an oriented  closed curve $C$ in $\L$ is the line integral $\int_C pd\alpha$. Clearly, the curve $C_0 $ representing the  initial beam of light encloses zero area. Since $T$ is an exact map, so does $C_n=T^n(C_0)$. See Figure \ref{fig:3beams}. 

\begin{figure}[ht]
\centering
\def\svgwidth{\textwidth}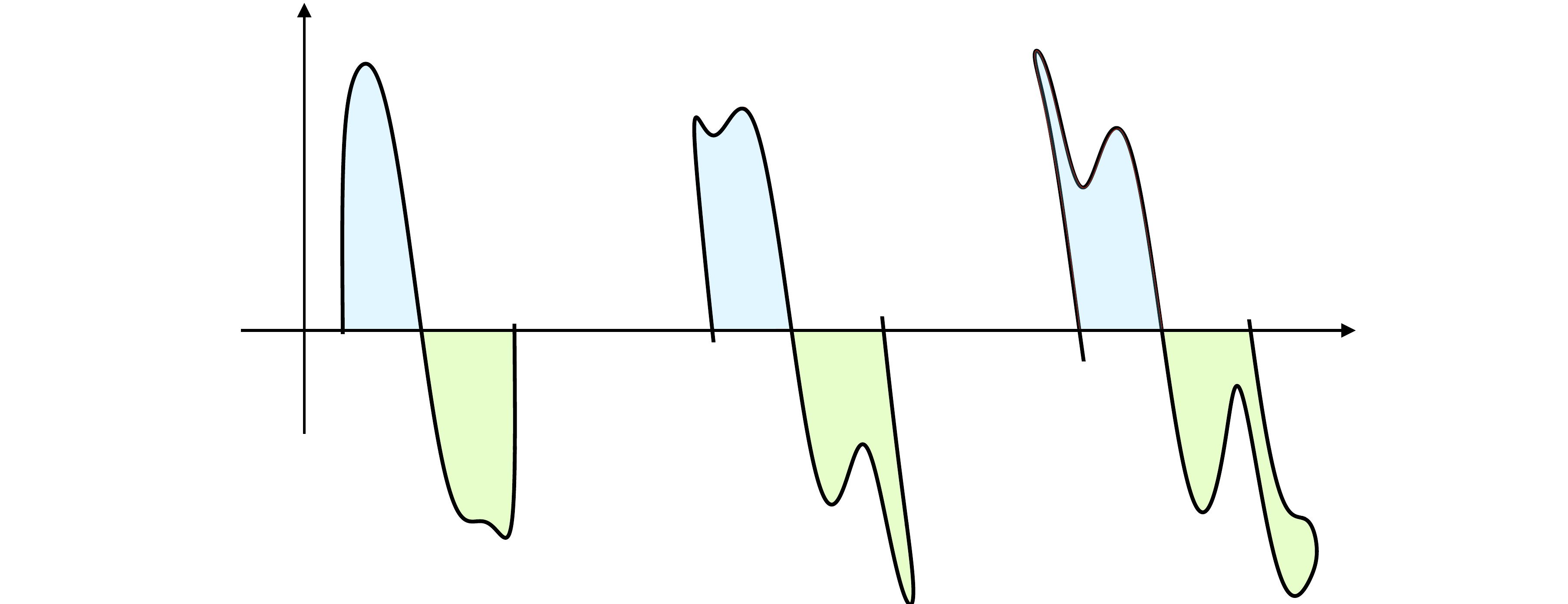
\caption{The succesive iterates $C_n=T^n(C_0)$, for $n=1,2,3$,  drawn on the phase cylinder $\L$ (spread flat). Each is a simple closed smooth curve of 0 signed area (blue area = green area). The billiard table is the ellipse $4x^2/5 +y^2=1$ and the source is $(.6, .2)$.}
\label{fig:3beams}
\end{figure}

\subsection{Contact elements, Legendrian knots, and wave fronts}

A {\em contact element} in the plane is a pair consisting of a point and a line through it. A coorientation of a contact element is a choice of one of the sides of the line. More conceptually, the space of cooriented contact elements is the spherization of the cotangent bundle $ST^*\R^2$: assign to a covector $\eta$ its kernel, a tangent line, and define coorientation by choosing the side on which $\eta$ is positive.  

The space of contact elements carries a contact structure, a 2-dimensional distribution defined by the ``skating condition": the foot point may move along the line, and the line may rotate about the foot point.
 Let $(x,y)$ be the standard  coordinates in $\R^2$ and $\theta$  the angle between the positive $x$-axis and the direction of the line;  then the contact distribution  is the kernel of the 1-form
$
\sin\theta\, dx-\cos\theta\, dy.
$

A smooth curve in $ST^*\R^2$ that is tangent to the contact distribution is called {\em Legendrian}. Its projection to the plane  is a {\em wave front}, a curve that may have singularities, generically semicubical cusps, but that has a tangent line at every point. Conversely, such a curve has a unique lift to the space of contact elements as a Legendrian curve.  

We introduce   coordinates $(\alpha,p,z)$ in $ST^*\R^2$, where $(\alpha,p)\in T^*S^1$ are the coordinates of the orthogonal line at the foot point $A$, and $z$ is the (signed) distance  of the line to the origin $O$.  See Figure \ref{contact}.

\begin{figure}[ht]
\centering
\def\svgwidth{\textwidth}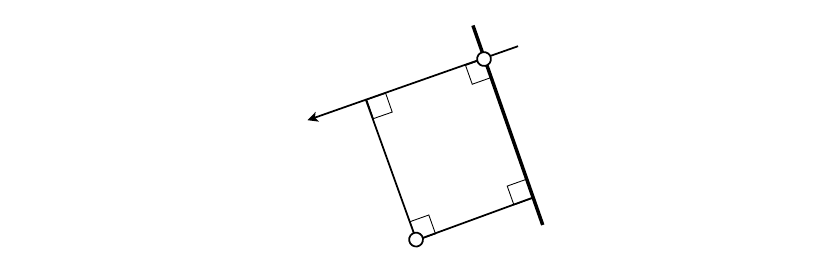
\caption{Coordinates $(\alpha,p,z)$ on the space $ST^*\R^2$ of cooriented contact elements in $\R^2$. With a cooriented line $\ell$ through $A$ we associate the (signed) distance $z$ to $O$ and the coordinates $(\alpha,p)$ of the perpendicular oriented line at $A$, pointing  to the positive side of $\ell$. 
}
\label{contact}
\end{figure}

This defines an identification of $ST^*\R^2$ with $J^1S^1$, the  space of 1-jets of functions $f:S^1\to\R$,  where $z=f(\alpha)$ and $p=f'(\alpha).$ On $J^1S^1$ there is a standard  contact form $dz-pd\alpha$ (the 1-jets of functions $z=f(\alpha)$ are Legendrian curves).

\begin{lemma} The identification $ST^*\R^2=J^1S^1$ is a contactomorphism. 
\end{lemma}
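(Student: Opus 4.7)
My plan is to verify the lemma by a direct computation in coordinates, showing that the two contact $1$-forms agree, up to a nowhere vanishing factor, under the identification. On $ST^*\R^2$ with coordinates $(x,y,\theta)$, the contact distribution is the kernel of $\sin\theta\,dx-\cos\theta\,dy$; on $J^1S^1$ with coordinates $(\alpha,p,z)$, it is the kernel of $dz-p\,d\alpha$. Since both spaces are three-dimensional, proving that these two $1$-forms are proportional in a common chart is enough.

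First I would write the identification explicitly. The coorienting unit normal to $\ell$ has direction $e^{i\alpha}$, so $\alpha$ differs from $\theta$ by $\pi/2$. The perpendicular oriented line through the foot point $A=(x,y)$ has signed distance $p=\det(A,e^{i\alpha})=x\sin\alpha-y\cos\alpha$ to the origin, exactly as in the proof of Lemma~\ref{lm:cohom}. Finally, $z$ is the signed distance from $O$ to $\ell$ itself, with sign given by the coorientation; this is (up to an overall sign) $z=-(x\cos\alpha+y\sin\alpha)$. Differentiating, the $d\alpha$ part of $dz$ produces exactly $p\,d\alpha$, so these terms cancel in $dz-p\,d\alpha$, leaving the residue $-\cos\alpha\,dx-\sin\alpha\,dy$. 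Substituting the relation $\alpha=\theta+\pi/2$ converts this into $\sin\theta\,dx-\cos\theta\,dy$, the contact form on $ST^*\R^2$. The lemma follows.

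There is no serious obstacle here; the entire content is the short coordinate calculation above. The only point deserving attention is consistency of sign and orientation conventions---which side of $\ell$ carries the positive coorientation, and with it the sign of $z$, and the rotation sense taking $\theta$ to $\alpha$. A slightly more invariant alternative, worth mentioning, is to span the skating distribution by its two elementary motions---the foot point sliding along $\ell$, and the line rotating about a fixed foot point---and to check directly that each annihilates $dz-p\,d\alpha$: along the first motion $\alpha$ and $z$ stay constant while $p$ varies, so both terms vanish; along the second $dx=dy=0$, and the definition of $p$ and $z$ implies the identity $dz=p\,d\alpha$ in one line.
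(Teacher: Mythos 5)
Your proposal is correct and is essentially the paper's own proof: both verify $dz-p\,d\alpha=\sin\theta\,dx-\cos\theta\,dy$ by direct computation from $\alpha=\theta+\pi/2$, $p=\det(A,e^{i\alpha})$, $z=\det(A,e^{i\theta})$, the only difference being that you expand the determinants in coordinates while the paper keeps the determinant notation. Your sign for $z$ matches the paper's convention, and your closing remark about checking the two generators of the skating distribution is a fine (equivalent) alternative.
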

\begin{proof} Using the notation of Figure \ref{contact}, 
$$
\alpha=\theta+\pi/2,\ p=\det(A, e^{i\alpha}),\ z=\det(A, e^{i\theta}),
$$
 so 
 $$
 dz-pd\alpha=\det(dA, e^{i\theta})=\sin\theta dx-\cos\theta dy,
 $$ 
 as claimed.
 \end{proof}

We have two projections 
\[
\begin{tikzcd}
& ST^*\R^2\arrow[dl,swap,"\pi_1"] \arrow[dr,"\pi_2"] \\
\R^2  && \L
\end{tikzcd}
\]
where  $\pi_1$ maps a cooriented contact element $(A,\ell)$ to $A$ and  $\pi_2$ maps it to the  line through $A$, orthogonal to $\ell$, oriented towards its positive side. 

In coordinates, $\pi_2:
(\alpha,p,z)\mapsto (p,\alpha)$ (``forgetting $z$''). The fibers of  $\pi_1$ are Legendrian, spanned by the vector field $\partial_\alpha-z\partial_p+p\partial_z$, while the fibers of  $\pi_2$, spanned by the vector field $\partial_z$,  are transverse to the contact distribution. Hence $\pi_2$ projects a smooth Legendrian curve in $ST^*\R^2$ to a smooth curve in $\L$. 

Conversely, let $C$ be a   closed  curve in $\L$. We want to lift it via $\pi_2:ST^*\R^2\to \L$ to a Legendrian curve $\widetilde C\subset ST^*\R^2$. Since the contact distribution on $ST^*\R^2$ is transverse to the fibers of $\pi_2$, once the initial point of the lifting is chosen, the lifting is uniquely determined, but it may fail to close up. 

\begin{lemma} \label{lm:front}
The lifted curve $\widetilde C$ is closed if and only if $\int_C p d\alpha =0$: the curve $C$ encloses zero signed area.
\end{lemma}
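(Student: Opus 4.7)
The plan is to reduce the closing-up condition to a first-order ODE for the $z$-coordinate along $C$. By the preceding lemma, the identification $ST^*\R^2 = J^1S^1$ carries the standard contact form $dz - p\,d\alpha$, and $\pi_2$ is the projection $(\alpha,p,z)\mapsto(\alpha,p)$. Since the fiber direction $\partial_z$ is transverse to the contact distribution (already noted in the text), once an initial value $z(0)$ is chosen, the Legendrian lift of $C$ is determined uniquely; the only question is whether it closes up.

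First I would parametrize $C$ by $t \in [0,1]$ with $C(0)=C(1)$, writing it as $(\alpha(t),p(t))$, and express the lift as $\widetilde{C}(t) = (\alpha(t),p(t),z(t))$. The Legendrian condition is that $dz - p\,d\alpha$ vanishes along $\widetilde{C}$, which is exactly the ODE $z'(t) = p(t)\alpha'(t)$. Integrating yields
\[
z(1) - z(0) \;=\; \int_0^1 p(t)\,\alpha'(t)\,dt \;=\; \int_C p\,d\alpha.
\]

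Next I would verify that closedness of $\widetilde{C}$ is equivalent to $z(1)=z(0)$. The first two coordinates match automatically: $C$ is closed in $\L$, and $\alpha$ lives in $S^1$ in both $\L$ and $J^1 S^1$, so the $(\alpha,p)$-components of the lift are forced to return to their initial values. Hence $\widetilde{C}(1)=\widetilde{C}(0)$ reduces to $z(1)=z(0)$, which by the displayed identity is precisely the vanishing of $\int_C p\,d\alpha$. This gives both directions of the equivalence at once.

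There is essentially no obstacle here; the lemma is a direct consequence of the explicit form of the contact structure together with transversality of the $\pi_2$-fibers. The only points worth flagging are the independence of the conclusion from the choice of the initial value $z(0)$ (a different choice merely vertically translates the lift) and from the parametrization of $C$ (the integral $\int_C p\,d\alpha$ is reparametrization-invariant).
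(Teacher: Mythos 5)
Your proof is correct and follows the same route as the paper: the Legendrian condition gives $dz = p\,d\alpha$ along the lift, so the holonomy of the $z$-coordinate around $C$ equals $\int_C p\,d\alpha$, and closedness is exactly the vanishing of this integral. You simply spell out the parametrization and the automatic matching of the $(\alpha,p)$-components, which the paper leaves implicit.
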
 

\begin{proof} The curve
$\widetilde C$ is closed if and only if the value of the third coordinate $z$ is the same at the endpoints. Since $dz=p d\alpha$ along a Legendrian curve, the values of $z$ at the endpoints are equal if and only if  $\int_C p d\alpha =0$.
\end{proof}

The curve $C\subset \L$ defines a 1-parameter family of oriented lines.
The projection of the lifted Legendrian curve $\widetilde C\subset ST^*\R^2$ to $\R^2$ is a wave front $\Delta$ that is orthogonal to this family of lines and is cooriented by their directions. If a closed   front $\Delta$ exists, i.e., $C$ encloses zero signed area, then there exists a whole 1-parameter family of fronts that are equidistant from each other. This non-uniqueness corresponds to the choice of the initial point of the lifted curve $\widetilde C$. 

The situation is the same as in the familiar  relation between evolutes and involutes: for an involute of a closed curve to close up it is necessary and sufficient for the curve to have zero signed length (the sign changes after each cusp), and the equidistant family of curves share their normals, and hence their evolutes.

\subsection{Vertices of wave fronts and Legendrian isotopies} 

A {\em vertex} of a plane curve is an extremum of its curvature or, equivalently, a cusp of the evolute, the envelope of its normals. The notion of vertex extends to cooriented wave fronts: the curvature at cusps is infinite, changing from $-\infty$ to $\infty$ (so  cusps are not vertices).

The classical 4-vertex theorem asserts that a simple closed convex curve has at least four vertices.  Let $\Delta$ be a cooriented wave front whose Legendrian lift to $ST^*\R^2$  is embedded, i.e., is a {\em Legendrian knot}. V. Arnold conjectured \cite{Ar,Ar1} that if this Legendrian knot is  homotopic as a Legendrian knot to the Legendrian lift of a circle, then $\Delta$ has at least four vertices. This conjecture was proved by Chekanov and Pushkar \cite{ChP} using Legendrian knot theory. 

A generic regular homotopy of a cooriented wave front is a composition of a number of moves, similar to the Reidemeister moves in knot theory, see Figure \ref{moves}, borrowed from \cite{ChP}. The first five moves are isotopies of the respective Legendrian knot, but the ``dangerous" self-tangency with coinciding coorientations correspond to self-intersection of the Legendrian lifted curve and changes the Legendrian knot type.

\begin{figure}[ht]
\centering
\includegraphics[width=\textwidth]{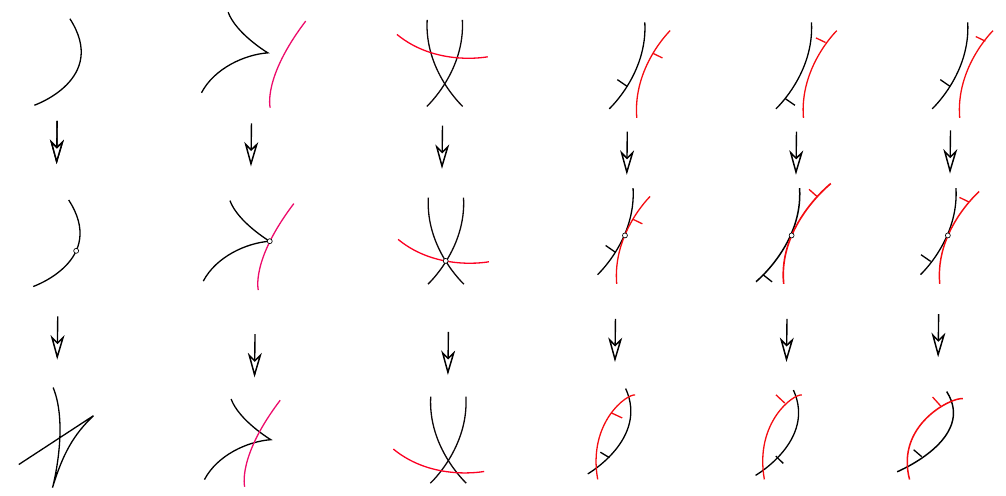}
\caption{Generic ``perestroikas" of cooriented wave fronts.   
}
\label{moves}
\end{figure}

For example, the curve on the left of Figure \ref{curves} has only two vertices, but the curve on the right is Legendrian isotopic to a circle, therefore  it has at least four vertices no matter how one draws it. Thus these curves are not Legendrian isotopic. On the other hand,  the Whitney winding number of both curves is one, hence they are regularly isotopic.

\begin{figure}[ht]
\centering
\includegraphics[height=.23\textwidth]{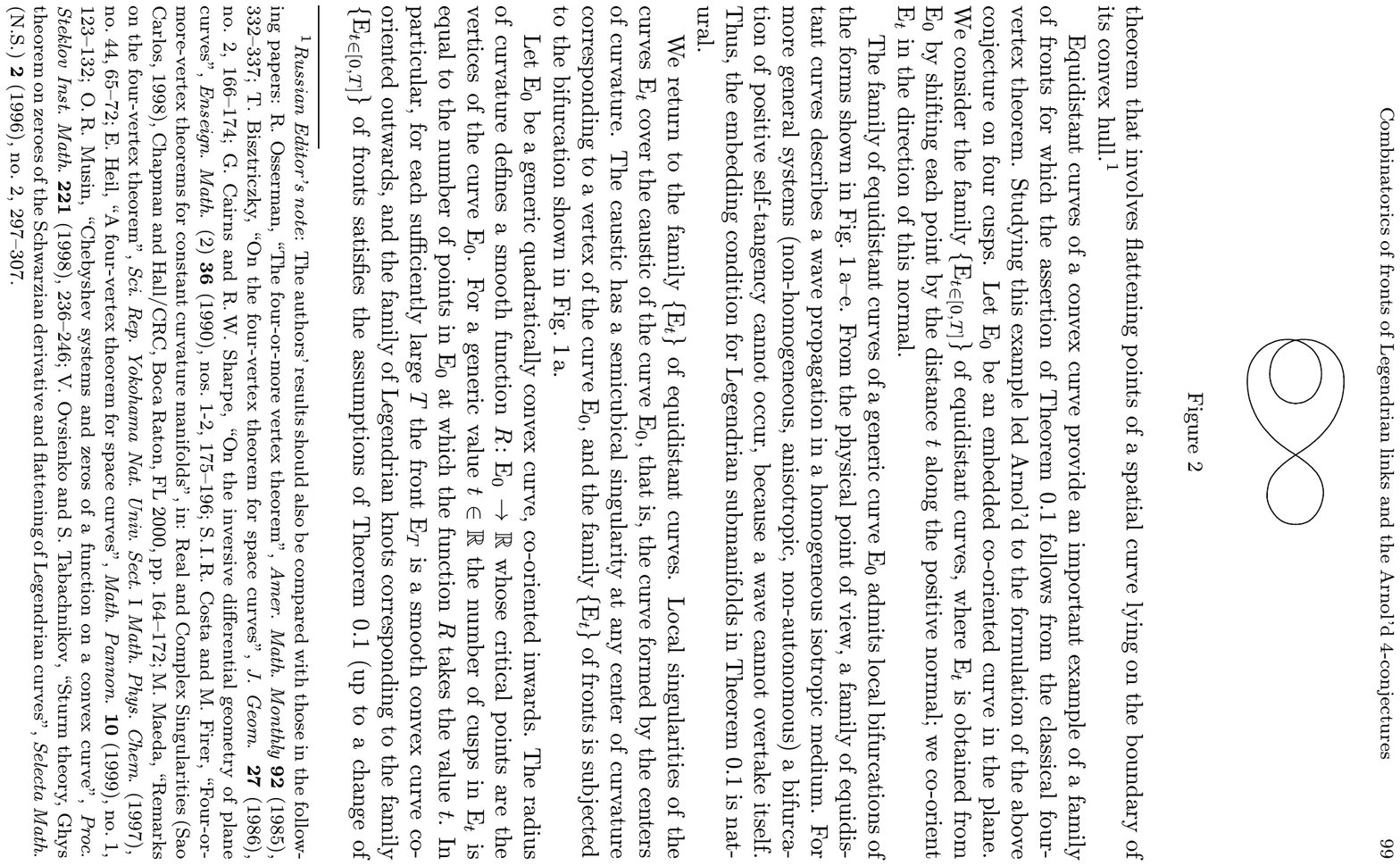}
\hspace{.2\textwidth}
\includegraphics[height=.23\textwidth]{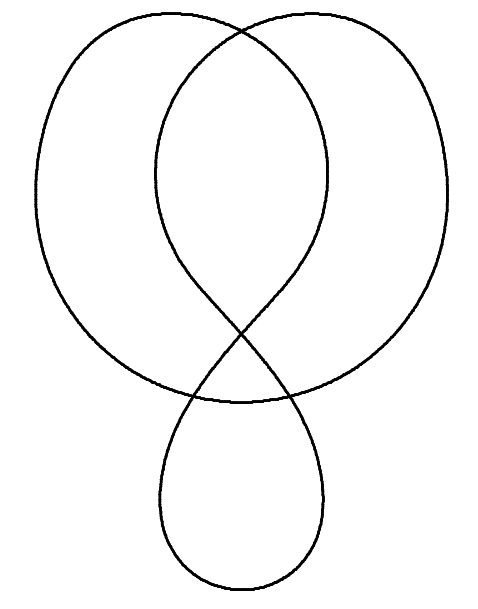}
\caption{Left: only two vertices; right: at least four vertices.   
}
\label{curves}
\end{figure}

\subsection{Summary}
With each ``beam'' of light rays   (a 1-parameter family of oriented lines in $\R^2$) we have associated four curves, 
$$ C\subset \L,\qquad  \widetilde C\subset ST^*\R^2,\qquad \Delta\subset\R^2, 
\qquad\Gamma\subset \RP^2.
$$
The correspondences between the cusps, vertices and inflection points on these curves is depicted in Figure \ref{fig:co}. 
\begin{figure}[ht]
\centering
\def\svgwidth{.6\textwidth}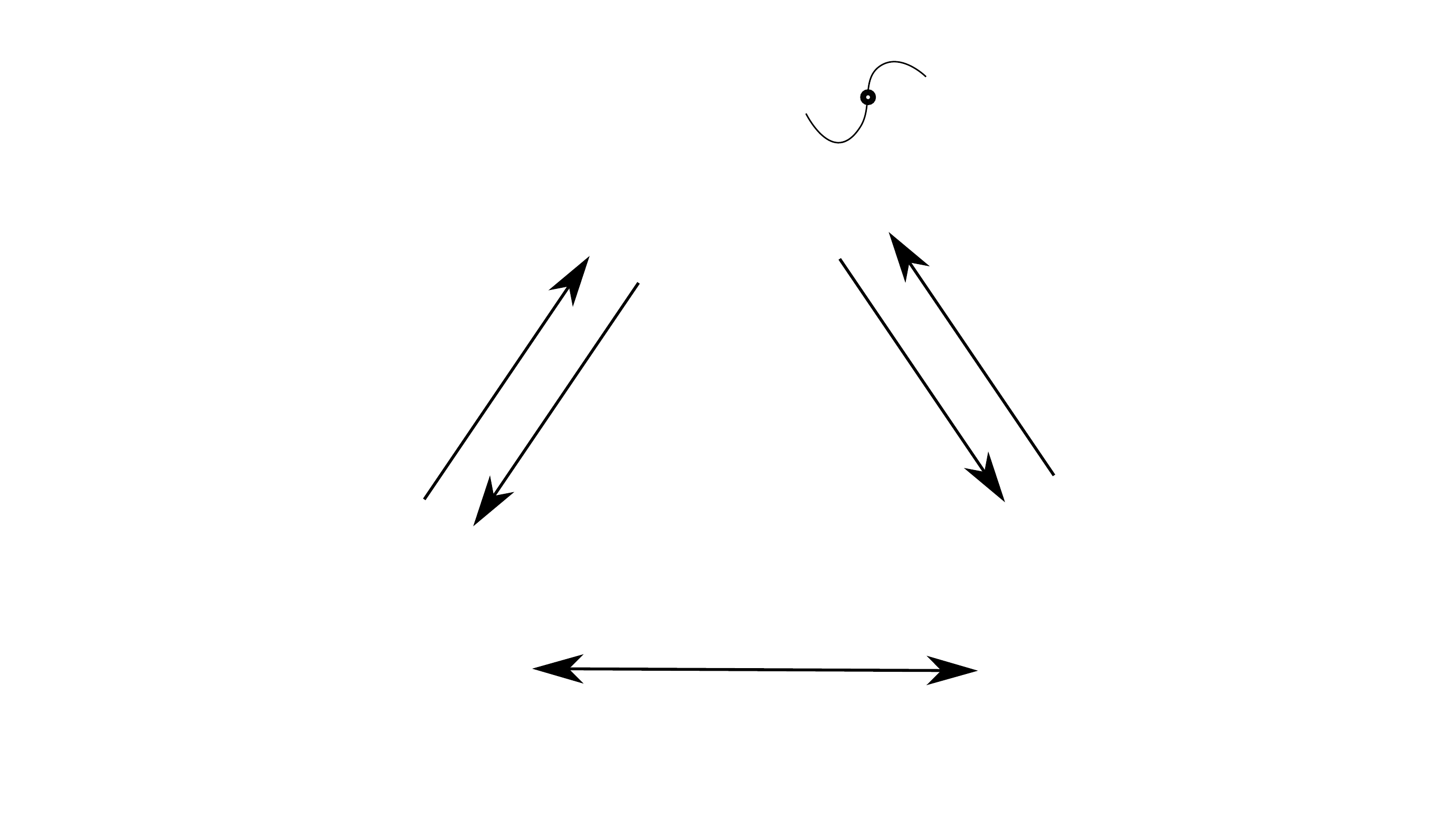
\caption{
}
\label{fig:co}
\end{figure}

\section{Proofs of Theorem \ref{thm:main}}

\paragraph{First proof.}  Cusps of $\Gamma_n$ correspond, by projective duality, to inflection points of $C_n$. These  inflections points  are  3-point contacts of $C_n$ with the curves in $\L$ corresponding to the 1-parameter families of lines passing through a fixed point. If the point is $(a,b)\in\R^2$, then the respective curve in $\L$ is
the graph of the first harmonic
$$
p=a\sin \alpha - b \cos \alpha,
$$
that is, it is an ellipse obtained as the intersection of the cylinder $\L$ with a plane through the origin. Note that this graph encloses zero signed area.

Consider the central projection of this cylinder to the unit sphere. This projection sends the graphs of the first harmonics to great circles. 

Since $C_n$ encloses zero signed area, it intersects every graph of the first harmonics. It follows that $\bar C_n$, the image of the curve $C_n$, is a smooth spherical curve that is not contained in any hemisphere. In particular, the convex hull of $\bar C_n$ contains the origin.

The (geodesic)  inflections of $\bar C_n$ in the standard metric of the sphere are its 3-point contacts with great circles. By the Segre theorem mentioned earlier, $\bar C_n$ has at least four inflections. Therefore so does $C_n$. \qed

\paragraph{Second proof.} Following \cite{An}, one can use the curve shortening flow to prove that the curve $C_n$ has at least four inflections. 
Recall that under the curve shortening flow, each point of the curve moves in the normal direction with the speed equal to the curvature; see
\cite{GH,Gr} and the book \cite{CZ}. 

Equip $\L$ with the flat Riemannian metric $d\alpha^2+dp^2$ and apply the curve shortening flow to $C_n$. Let $C_n(s)$ be arclength parametrization, then the flow is given by the partial differential equation $C_t=C_{ss}$.  

A variation of the standard proof shows that the evolution is defined for all $t\geq 0$, deforming $C_n$  through embedded curves,  shrinking it to a horizontal curve $p=const$, which is a closed geodesic.
A version of the maximum principle implies that the number of inflections does not increase during this evolution, see \cite{An}. This is illustrated in Figure \ref{infl}. 

\begin{figure}[ht]
\centering
\includegraphics[height=.35\textwidth]{./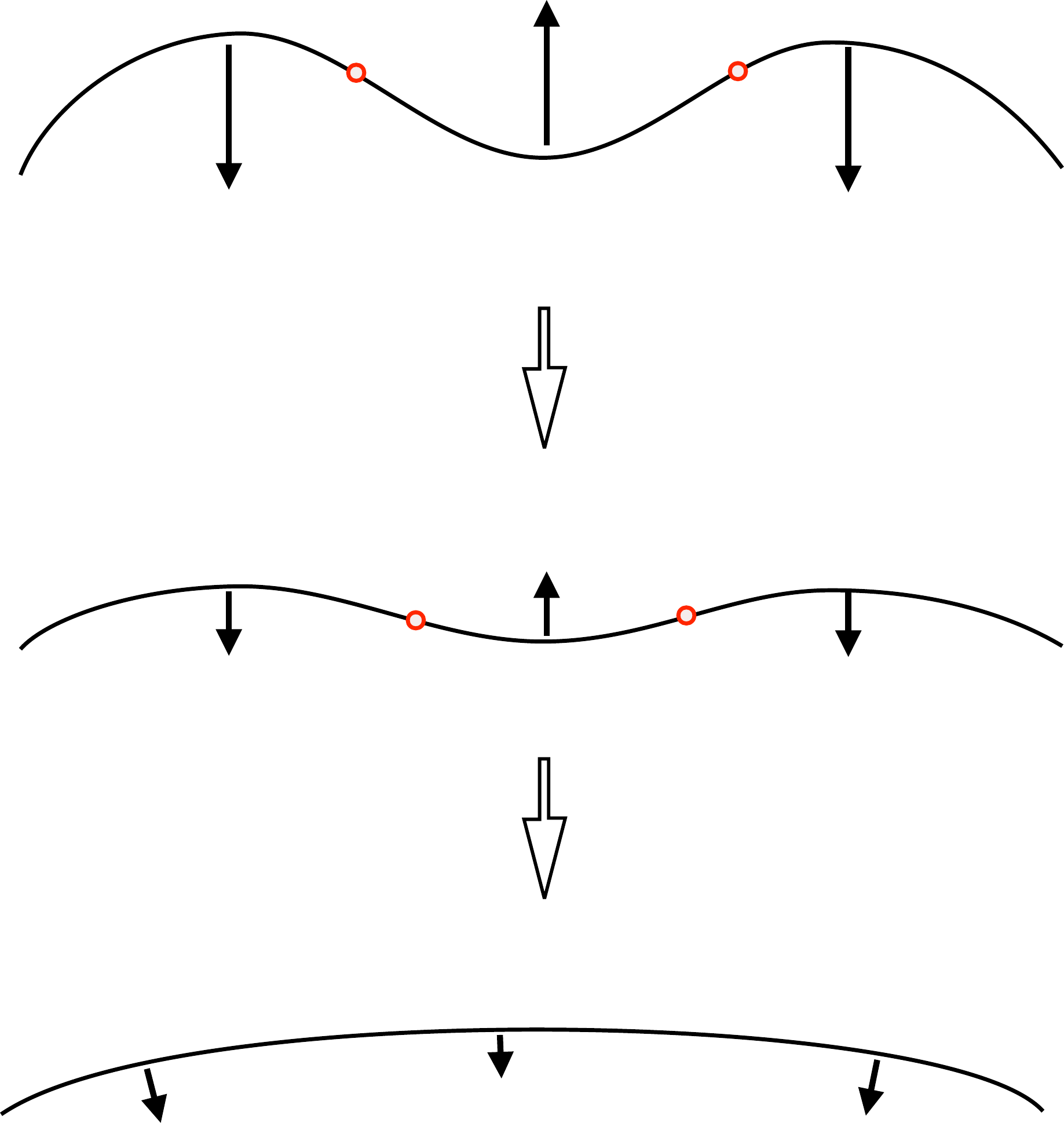}
\caption{In the curve shortening flow, two nearby inflection points may cancel each other, but they cannot appear on a convex arc.   
}
\label{infl}
\end{figure}

Next, a version of Lemma 3.1.7 in \cite{GH}  or Lemma 1.10 of \cite{Gr} shows that the evolving curves enclose zero signed areas.

\begin{lemma} \label{lm:flow}
The curve shortening flow   $C_t=C_{ss}$  preserves the signed area $\int_C p d\alpha.$ \end{lemma}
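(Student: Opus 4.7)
The plan is a direct variational computation, reducing the time derivative of the signed area to a total geodesic curvature integral that vanishes for topological reasons.

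First I would parametrize $C(t)$ by a time-independent parameter $u\in[0,1]$, writing $C(u,t)=(\alpha(u,t),p(u,t))$, where $\alpha$ is multivalued ($\alpha(1,t)-\alpha(0,t)=2\pi$, since $C$ wraps once around $\L$) but its partial derivatives $\alpha_u$ and $\alpha_t$ are single-valued. Then
\[
\frac{d}{dt}\int_0^1 p\,\alpha_u\,du=\int_0^1\bigl(p_t\alpha_u+p\,\alpha_{ut}\bigr)\,du,
\]
and integrating the second term by parts produces the boundary term $[p\,\alpha_t]_0^1$, which vanishes since $p(0,t)=p(1,t)$ and $\alpha_t(0,t)=\alpha_t(1,t)$ (differentiate $\alpha(1,t)-\alpha(0,t)=2\pi$ in $t$). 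Hence
\[
\frac{dA}{dt}=\int_0^1(p_t\alpha_u-p_u\alpha_t)\,du=\int_0^1\det(C_u,C_t)\,du.
\]

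Next, I would substitute the flow equation $C_t=C_{ss}=\kappa N$, where $N$ is the counterclockwise unit normal (so $\det(T,N)=1$) and $\kappa$ the signed curvature in the flat metric $d\alpha^2+dp^2$. Writing $C_u=|C_u|\,T$, this gives $\det(C_u,C_t)=\kappa\,|C_u|$, and therefore
\[
\frac{dA}{dt}=\int_C\kappa\,ds,
\]
which is $2\pi$ times the rotation index of the tangent vector of $C$.

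The final step is topological: I would argue that this rotation index is zero for each $C_n$. The curves $C_n=T^n(C_0)$ are smooth embedded simple closed curves on the cylinder $\L$ wrapping around it exactly once, since $T$ is an orientation-preserving diffeomorphism and $C_0$ is the graph $p=a\sin\alpha-b\cos\alpha$. Any such curve is isotopic through embeddings to the horizontal circle $p=\mathrm{const}$, and since the tangent bundle of $\L$ is trivialized by $\partial_\alpha$, the rotation index of the tangent is a regular-homotopy invariant (Whitney--Graustein on the cylinder); for the horizontal circle it is manifestly $0$, so $\int_C\kappa\,ds=0$. The main obstacle is precisely this last topological step: it relies essentially on $C$ being embedded and non-contractible (for contractible planar curves one recovers the standard result that CSF decreases area at the rate $-2\pi$), so the argument is genuinely sensitive to the geometric setup established earlier in the paper.
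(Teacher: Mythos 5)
Your proof is correct and follows essentially the same route as the paper: differentiate the signed area, integrate by parts to reduce $dA/dt$ to the total curvature $\int_C \kappa\,ds$, and observe that this vanishes for the curves at hand. The only difference is that you justify the final step (vanishing turning number) via a Whitney--Graustein argument for embedded curves winding once around the cylinder, whereas the paper asserts in one line that ``the total curvature of a closed curve that goes around the cylinder equals zero''; your extra care is warranted, since that assertion depends on the regular homotopy class (hence on embeddedness), not merely on the curve being non-contractible.
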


\begin{proof} 
Let $(\alpha(s),p(s))$ be arc length parameterization of a non-contractible  closed curve in $\L$, so that $\alpha_s^2+p_s^2=1$. Then its time evolution under the curve-shortening flow is given by
$
(\alpha_t,p_t)=kN=k(-p_s,\alpha_s),
$
where the subscript denotes the derivative, $k$ is the curvature, and $N$ is the unit normal. It follows that
$$
\frac{d \int p d\alpha}{dt} = \int [k \alpha_s^2 - p (kp_s)_s] ds = \int [k \alpha_s^2+kp_s^2] ds = \int k(s) ds,
$$
where the second equality is due to integration by parts. 

It remains to note that the total curvature of a closed curve that goes around the cylinder equals zero. 
\end{proof} 

 As $C_n$ approaches  $C_0$, it is given by the graph of a function $p=F(\alpha)$. The inflection points of this graph are the points where it is tangent to 2nd order to  the graphs of functions of the form $h(\alpha)=a\cos(\alpha)+b\sin(\alpha) .$ 
 
 For each $\alpha\in S^1$, one can find unique $a,b$ such that 
 $F(\alpha)=h(\alpha)$, $ F'(\alpha)=h'(\alpha).$ Since $h''+h=0,$ the equation $F''(\alpha)=h''(\alpha) $ holds if and only if  $F''(\alpha)+F(\alpha)=0.$ So the inflection points of $F$ are  the zeros  of the function  $G:=F''+F.$  
 
 To conclude that $G$ has no less than four zeros, apply the Sturm-Hurwitz theorem that states that the number of zeros of a $2\pi$-periodic function is not less than the number of zeros of its first non-trivial harmonic, see, e.g., \cite{Ar,Ar1}. 
 
 Since the differential operator $d^2+1$ preserves the order of Fourier terms and kills 1st order terms, $G$ has no first harmonics. Since the curve encloses zero signed area, $F$ has zero constant term, and so does $G$, as needed. \qed 
 
 \begin{remark}
 {\rm The Sturm-Hurwitz theorem has many proofs, see Section 8.1 of \cite{OT}. Interestingly, one of them, due to G. Polya, makes use of the heat equation, a close relative  of the curve shortening flow. 
 }
 \end{remark}
 
\paragraph{Third proof.} 
This argument relies on the correspondence between the cusps of $\Gamma_n$ and the vertices of $\Delta_n$, its normal front. 

Since $C_n$ encloses zero signed area, Lemma \ref{lm:front} implies that it admits a Legendrian lift $\widetilde C_n\subset ST^*\R^2$, and its  projection to $\R^2$ is a closed curve, possibly with cusps, which is normal to the rays of $C_n$.

A homotopy of the curve $C_n$ to $C_0$ in the class of smooth closed embedded curves that enclose zero signed area induces a Legendrian isotopy between the Legendrian knots   $\widetilde C_n$ and $\widetilde C_0$. (Such a homotopy is provided by the curve shortening flow but, unlike the second proof, one can use any other homotopy for this purpose). 

Now our ``black box", the Pushkar-Chekanov  theorem \cite{ChP}, implies that $\Delta_n$ has at least four vertices. \qed

\section{Variations}

In this last section we briefly mention other variants of our result.

\paragraph{Spherical and hyperbolic geometry.}
One can extend  Theorem \ref{thm:main} to geodesically convex billiards in  spherical and hyperbolic geometries (the former lie in one hemisphere). 

The space of oriented geodesics (great circles) in $S^2$ is identified with $S^2$ itself via the usual equator/pole correspondence. The phase cylinder $M\subset S^2$ parametrizes oriented geodesics intersecting $\gamma$. The billiard ball map   preserves the standard area form on $S^2$, and the curve $C_n=T^n(C_0)$ bisects the area of the sphere. 

According to Arnold's ``tennis ball theorem" \cite{Ar} (which also follows from the theorem of Segre), a smooth closed embedded spherical curve that bisects the area has at least four inflections. As before,  the curve $C_n$ is dual to the caustic $\Gamma_n$, hence the latter has at least four cusps.

In the hyperbolic case, consider  the hyperboloid model  $H^2=\{x^2+y^2-z^2=-1, \ z>0\}$, with the Minkowski metric $dx^2+dy^2-dz^2$ restricted to $H^2$. An oriented geodesic is given by intersecting $H^2$  with  an oriented plane through the origin, the orthogonal complement (in the Minkowski sense) of a space-like unit vector. 

These vectors comprise a hyperboloid of one sheet $H^{1,1}=\{x^2+y^2-z^2=1\}$, equipped with the area form induced from the ambient Minkowsky space (this area form is invariant under the group of motions  $SO(2,1)$). 
The phase cylinder $M\subset H^{1,1}$ corresponds to geodesics intersecting $\gamma$, and the billiard ball map is exact area preserving. 

To show that the curve $C_n$ has at least four inflections, we use the same argument as in our first proof of Theorem \ref{thm:main}: centrally project the hyperboloid to the sphere. This projection takes inflections to inflections, and the image of $C_n$ contains the origin in its convex hull. Then the Segre theorem implies the result.

\paragraph{Projective billiards.} For any convex curve $\gamma\subset\R^2$ with a transverse vector field $v$ along it one can define the projective billiard map $T:M\to M$ \cite{TaE,TaG}. The reflection law is as follows. Consider an incoming ray  at a point $x\in\gamma$ in the  direction $u$, decompose $u=u_1+u_2$,  where $u_1$ is tangent to $\gamma$ at $x$ and $u_2$ is a multiple  of $v(x)$. Then the  outgoing ray passes through $x$ in the direction $u_1-u_2$. Equivalently,  the tangent line, the transverse line, the incoming, and the outgoing ones, form a harmonic quadruple of lines.

If the transverse field consists of the normals, one has the usual  law ``the angle of incidence equals the angle of reflection". 

If $\gamma$ is an origin-centered ellipse and the transverse field $v$ is given by the gradient of a homogeneous function of two variables, then the projective billiard ball map is again exact area preserving. The area form on the phase space $M$ is the same as the one on the space of oriented geodesics in the hyperbolic plane, but this time one considers the projective, or Cayley-Klein, model of hyperbolic geometry in the interior of the ellipse $\gamma$. The total area of $M$ is infinite in this case.

As before, Theorem \ref{thm:main} holds: see Figure \ref{fig:proj} for an illustration.

\begin{figure}[ht]
\centering
\includegraphics[width=.7\textwidth]{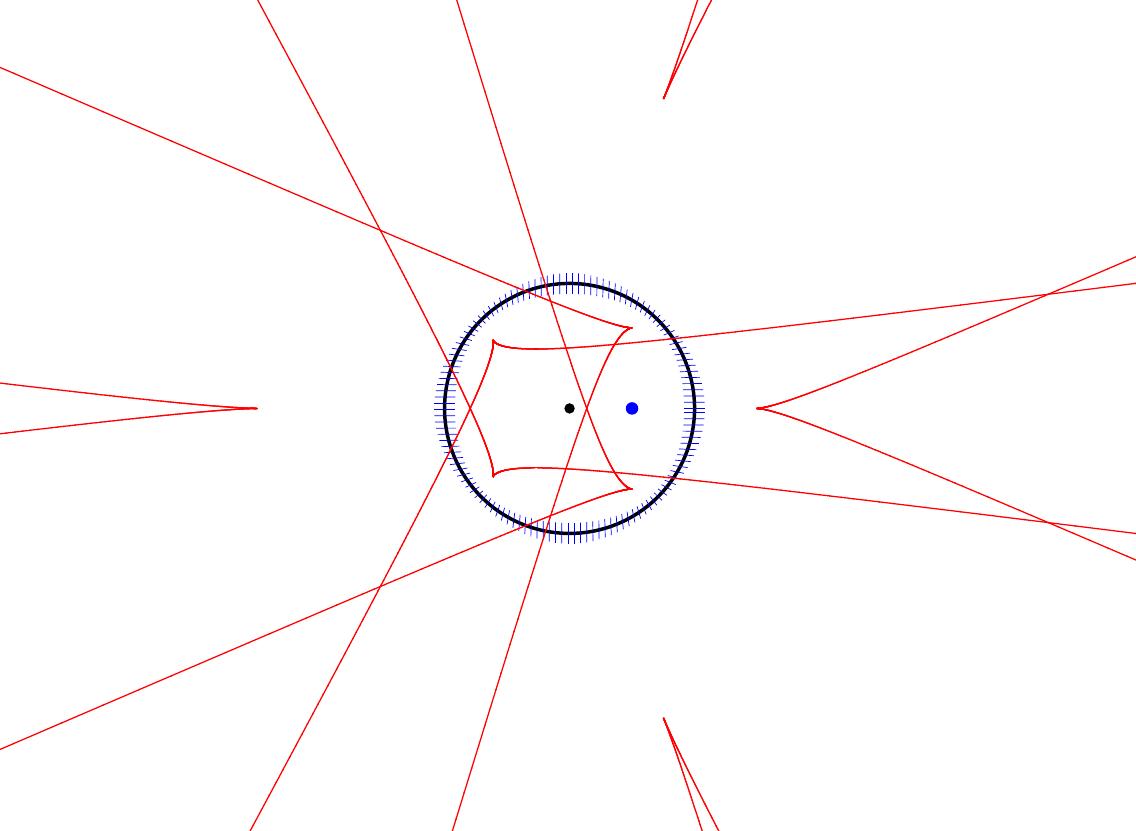}
\caption{The 1st  caustic by  reflection, showing 8 cusps,  in a projective billiard system with a circular table and the  exact transverse field $v= \nabla (x^4+y^4)$. 
}
\label{fig:proj}
\end{figure}


\paragraph{Other initial beams.} 
Finally, we note that Theorem \ref{thm:main}, along with its proofs, extends to some other initial beams of light. For example, one may consider a 1-dimensional source, an oval that lies inside $\gamma$ and that emanate rays of light in the outward normal directions.

\end{document}